\newtheorem{theorem}{Theorem}
\newtheorem{corollary}[theorem]{Corollary}
\newtheorem{definition}[theorem]{Definition}
\newtheorem{lemma}[theorem]{Lemma}
\newtheorem{globalClaim}[theorem]{Claim}
\newtheorem{question}[theorem]{Question}
\newtheorem{fact}[theorem]{Fact}
\newtheorem{remark}[theorem]{Remark}
\begin{document}
\title[Chang's Conjecture and semiproperness of nonreasonable posets]{Chang's Conjecture and semiproperness of nonreasonable posets}

\author{Sean D. Cox}
\email{scox9@vcu.edu}
\address{
Department of Mathematics and Applied Mathematics \\
Virginia Commonwealth University \\
1015 Floyd Avenue \\
Richmond, Virginia 23284, USA 
}


\subjclass[2010]{03E05,03E35, 03E55, 03E57, 03E65
}

\thanks{The author gratefully acknowledges support from the VCU Presidential Research Quest Fund.}

\begin{abstract}

Let $\mathbb{Q}$ denote the poset which adds a Cohen real then shoots a club through the complement of $\big( [\omega_2]^\omega \big)^V$ with countable conditions.  We prove that the version of Strong Chang's Conjecture from \cite{MR2965421} implies semiproperness of $\mathbb{Q}$, and that semiproperness of $\mathbb{Q}$---in fact semiproperness of any poset which is sufficiently \emph{nonreasonable} in the sense of Foreman-Magidor~\cite{MR1359154}---implies the version of Strong Chang's Conjecture from \cite{MR2723878} and \cite{MR1261218}.  In particular, semiproperness of $\mathbb{Q}$ has large cardinal strength, which answers a question of Friedman-Krueger~\cite{MR2276627}.  One corollary of our work is that the version of Strong Chang's Conjecture from \cite{MR2965421} does not imply the existence of a precipitous ideal on $\omega_1$.

\end{abstract}

\maketitle


\section{Introduction}

Foreman-Magidor-Shelah~\cite{MR924672} proved the consistency of \emph{Martin's Maximum (MM)}, and isolated an interesting consequence:
\[
\dagger:  \ \ \text{Every poset which preserves stationary subsets of } \omega_1 \text{ is semiproper.}
\]
In fact they showed that MM implies generalized stationary set reflection, which in turn implied $\dagger$.  They proved that $\dagger$ implies precipitousness of the nonstationary ideal on $\omega_1$; thus $\dagger$ has large cardinal strength.   They also proved that generalized stationary set reflection implies presaturation of the nonstationary ideal on $\omega_1$; recently Usuba~\cite{MR3248209} reduced the assumption to ``$\dagger$ holds for posets of size $\le 2^{\omega_1}$".  He also proved that this bounded dagger principle implies a version of Chang's Conjecture.

The $\dagger$ principle is also interesting for particular posets definable in ZFC.  For example, it is a theorem of ZFC that Namba forcing preserves stationary subsets of $\omega_1$ (and even stronger properties, by \cite{MR1623206}).  Moreover:\footnote{Shelah~\cite{MR1623206} Chapter XII proves that semiproperness of Namba forcing implies $\text{SCC}^{\text{cof}}$; and a minor variation in the proof of Section 3 of Doebler~\cite{MR3065118} proves that $\text{SCC}^{\text{cof}}$ implies semiproperness of Namba forcing.}
\begin{theorem}[Shelah~\cite{MR1623206}; see also Section 3 of Doebler~\cite{MR3065118}]\label{thm_ShelahThm}
Semiproperness of Namba forcing is equivalent to a certain version of Strong Chang's Conjecture (the version we call $\text{SCC}^{\text{cof}}$ in Section \ref{sec_Prelims}).
\end{theorem}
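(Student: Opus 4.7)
The plan is to prove both directions of the equivalence using the structural features of Namba forcing $\mathbb{N}m$: namely that its conditions are subtrees of $\omega_2^{<\omega}$ with cofinal-in-$\omega_2$ splitting above every node, and that the generic object is an $\omega$-sequence $\dot f \in \omega_2^\omega$ cofinal in $\omega_2^V$. I interpret $\text{SCC}^{\text{cof}}$ as asserting that for every large enough regular $\theta$ and every countable $M \prec H_\theta$, the set of suprema $\sup(N \cap \omega_2)$, as $N$ ranges over countable elementary extensions of $M$ in $H_\theta$ with $N \cap \omega_1 = M \cap \omega_1$, is cofinal in $\omega_2$.

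For the direction $\text{SCC}^{\text{cof}} \Rightarrow$ semiproperness of $\mathbb{N}m$, I would fix a countable $M \prec H_\theta$ containing $\mathbb{N}m$ and a condition $p \in M \cap \mathbb{N}m$, and simultaneously construct a descending sequence $p = p_0 \ge p_1 \ge \cdots$ of Namba conditions together with an increasing tower $M = M_0 \subseteq M_1 \subseteq \cdots$ of countable elementary submodels of $H_\theta$ with constant $\omega_1$-trace and with $\sup_n \sup(M_n \cap \omega_2) = \delta$, where $\delta$ has countable cofinality. At stage $n$ I enumerate the dense open subsets of $\mathbb{N}m$ belonging to $M_n$ and prune $p_n$ to $p_{n+1} \in M_{n+1}$, retaining only splitting that remains cofinal in $\sup(M_{n+1} \cap \omega_2)$; $\text{SCC}^{\text{cof}}$ is exactly what permits each extension of the tower without altering the $\omega_1$-trace. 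The fusion $q := \bigcap_n p_n$ will be a legitimate Namba condition because every node's splitting remains cofinal in $\delta$, and $q$ forces the generic $\omega$-sequence to lie inside $M_\omega := \bigcup_n M_n$; semigenericity then follows because $M_\omega \cap \omega_1 = M \cap \omega_1$.

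For the reverse direction, given a countable $M \prec H_\theta$ and $\gamma \in M \cap \omega_2$, I would select $p \in M \cap \mathbb{N}m$ forcing $\dot f(0) > \gamma$ (possible by elementarity), and use semiproperness to extract an $(M, \mathbb{N}m)$-semigeneric strengthening $q \le p$. Working in $V$, I then define $N$ to be the Skolem hull in $H_\theta$ of $M$ together with the set of ordinals $\alpha < \omega_2$ such that some condition $r \le q$, lying in a maximal antichain in $M$ deciding $\dot f(n)$ for some $n$, forces $\dot f(n) = \check \alpha$. Semigenericity applied to names for countable ordinals gives $N \cap \omega_1 = M \cap \omega_1$; the cofinal splitting of $q$ above $\gamma$ yields $\sup(N \cap \omega_2) > \gamma$; and letting $\gamma \in M$ vary delivers the cofinal-in-$\omega_2$ conclusion of $\text{SCC}^{\text{cof}}$.

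The main obstacle in the first direction will be verifying that the fusion $\bigcap_n p_n$ is a genuine Namba tree, which requires orchestrating the pruning so that at each node the splitting remains cofinal in the moving target $\sup(M_{n+1} \cap \omega_2)$ and hence in $\delta$; this is precisely why the $M_n$'s need to be chosen to converge to $\delta$, and why the standard countably-closed fusion argument cannot be copied verbatim from, say, Prikry-style forcings. The main obstacle in the reverse direction will be bootstrapping from the ``external'' witness $M[G]$ in $V[G]$ back to a countable submodel $N \in V$: one must extract enough combinatorial content from the semigenericity of $q$ to define $N$ purely in terms of maximal antichains and the forcing relation, and verify that this $N$ remains countable despite the potential for uncountable antichains in $\mathbb{N}m$.
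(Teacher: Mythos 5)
Both directions of your sketch have genuine gaps, and the first is fatal as written. A Namba condition forces the generic sequence $\dot f$ to be cofinal in $\omega_2^V$, so no condition can force $\dot f$ to ``lie inside $M_\omega$'' for a countable ground-model set $M_\omega$; moreover a tree all of whose splitting is merely cofinal in a fixed $\delta < \omega_2$ of countable cofinality is not a Namba condition (each splitting node needs $\aleph_2$-many successors, not a set cofinal in some $\delta$). The linear fusion with a single tower $M_0 \subseteq M_1 \subseteq \cdots$ converging to one $\delta$ therefore cannot produce a legitimate condition, let alone a semigeneric one. The correct shape of the argument (Doebler's, in the Foreman--Magidor--Shelah style) attaches a model $M_s$ to each \emph{node} $s$ of the condition being built, with $M_{s^\frown\langle\alpha\rangle}$ an $\sqsubseteq$-extension of $M_s$ containing $\alpha$; the ``cofinally many $\alpha$'' clause of $\text{SCC}^{\text{cof}}$ is what keeps enough successors at each node, and the $n$-th name for a countable ordinal is decided inside $M_s$ for $s$ at level $n$, so its value lands in $M_s \cap \omega_1 = M \cap \omega_1$. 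Different branches carry different, mutually incompatible towers of models; semigenericity is about trapping $M[\dot G]\cap\omega_1$, not about trapping $\dot f$ in a single countable set.

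In the reverse direction, the set of $\alpha < \omega_2$ such that some $r \le q$ forces $\dot f(n) = \check\alpha$ is essentially the set of ordinals occurring in the tree $q$ above its stem, which has cardinality $\aleph_2$; your $N$ is therefore uncountable and the conclusion collapses. You flag this as the main obstacle but do not resolve it, and it cannot be resolved in that form. The repair is to adjoin a \emph{single} ordinal: pick a node $t$ of $q$ above the stem whose last entry $\beta$ exceeds $\gamma$, and let $N := \text{Sk}^{\mathfrak{A}}(M \cup \{ \beta \})$, computed in $V$. For each function $h \in M$ let $\dot\alpha_h$ name $h(\dot f(|t|-1))$ if that is a countable ordinal and $0$ otherwise; this is a name in $M$ for a countable ordinal, semigenericity of $q$ forces $\dot\alpha_h \in \check M$, and the subtree of $q$ above $t$ decides $\dot\alpha_h = h(\beta)$, whence $N \cap \omega_1 = M \cap \omega_1$ while $\beta \in N \cap [\gamma,\omega_2)$. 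This hull-of-one-point device is exactly the one used in the paper's proof of Theorem \ref{thm_MainTheorem_Generalized}, and together with Lemma \ref{lem_WoodinLemma} it delivers $\text{SCC}^{\text{cof}}$.
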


This paper is about the $\dagger$ principle for the poset which adds a Cohen real, then shoots a continuous $\subset$-chain of length $\omega_1$ through $[\omega_2]^\omega - V$ using countable conditions, which we'll denote by
\begin{equation}\label{eq_GitikPoset}
\text{Add}(\omega) * \dot{\mathbb{C}}\big([\omega_2]^\omega - V \big).
\end{equation}
This poset has appeared in several applications in the literature, such as separating internal unboundedness from internal stationarity (Krueger~\cite{MR2674000}) and for applications involving thin stationary sets and disjoint club sequences (Friedman-Krueger~\cite{MR2276627}).  It always preserves stationary subsets of $\omega_1$, which follows from the following very useful fact:
\begin{fact}\label{fact_Gitik}[Abraham-Shelah~\cite{MR716625}, Gitik~\cite{MR820120}, Velickovic~\cite{MR1174395}]
If $\sigma$ is $\text{Add}(\omega)$-generic over $V$, then $V[\sigma]$ believes that $[\omega_2]^\omega - V$ is projective stationary;\footnote{That is, for every stationary $S \subseteq \omega_1$ there are stationarily many $z \in [\omega_2]^\omega - V$ such that $z \cap \omega_1 \in S$.  The Gitik and Velickovic arguments actually prove something much more general:  if $W$ is an outer model of $V$ and $W$ has some real that is not in $V$, then for every $W$-regular $\kappa \ge \omega_2^W$, $W$ believes that $[\kappa]^\omega - V$ is projective stationary.}
\end{fact}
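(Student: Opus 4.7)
\emph{Proof plan.} I will show, in $V[\sigma]$, that for every stationary $S\subseteq\omega_1$ and every algebra $F:[\omega_2]^{<\omega}\to\omega_2$, there is a countable $z\subseteq\omega_2$ closed under $F$ with $z\cap\omega_1\in S$ and $z\notin V$. The first move is a ccc reduction to $V$. Fix $\text{Add}(\omega)$-names $\dot F,\dot S$; working in $V$, define the countable-valued function
$$F^*(a):=\{\beta<\omega_2:\exists p\in\text{Add}(\omega),\ p\Vdash\dot F(a)=\beta\},$$
which is countable because $\text{Add}(\omega)$ is, together with the set $A:=\{\delta<\omega_1:\exists p\in\text{Add}(\omega),\ p\Vdash\delta\in\dot S\}$, which lies in $V$ and is stationary in $V$ (hence in $V[\sigma]$, by ccc-preservation of stationary subsets of $\omega_1$). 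Then in $V[\sigma]$ one has $F(a)\in F^*(a)$ and $S\subseteq A$, so any $F^*$-closed set is $F$-closed and any $S$-trace is an $A$-trace.

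Next, for $V[\sigma]$-stationarily many $\delta\in S$ I can pick $N\in V$ with $N\prec H_\theta^V$ countable, containing $\dot F,\dot S,F^*$ and the other relevant parameters, closed under $F^*$, and with $N\cap\omega_1=\delta$; this works because the set of such traces $\delta$ includes a $V$-club and $S\subseteq A$ is $V[\sigma]$-stationary. Now I perturb $N\cap\omega_2$ using the Cohen real: in $V$ fix an $\omega$-sequence $\langle\eta_n:n<\omega\rangle$ of ordinals in $\omega_2\setminus N$ sitting above $\sup(N\cap\omega_2)$, read off the subset $B':=\{\eta_n:\sigma(n)=1\}\in V[\sigma]$, and let $z$ be the iterated $F^*$-closure of $(N\cap\omega_2)\cup B'$. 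By construction $z$ is countable and $F$-closed. The goal is that $z\cap\omega_1=\delta$ and $z\cap\{\eta_n:n<\omega\}=B'$, for then $z$ codes $\sigma$ relative to the $V$-sequence $\langle\eta_n\rangle$, and so $z\notin V$ while $z$ carries the prescribed trace $\delta\in S$.

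The main obstacle is precisely the control of the $F^*$-closure: a priori iterated $F^*$-closure could either absorb some $\eta_m$ with $\sigma(m)=0$ (ruining the coding) or add some new ordinal in $[\delta,\omega_1)$ (ruining the trace). The first issue is the combinatorial heart of the Gitik--Velickovic argument: one fixes in $V$ an almost disjoint family $\langle E_\xi:\xi<\omega_2\rangle$ of countable subsets of $\omega_2\setminus N$, observes that any countable subset of $\omega_2$ (in particular the $F^*$-closure of any fixed starting data) meets only countably many $E_\xi$ nontrivially, and uses a density argument in $V$ (or a small auxiliary piece of $\sigma$) to draw $\langle\eta_n\rangle$ from some $E_\xi$ ``free'' with respect to the relevant closures. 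The second issue (trace control) is handled by enlarging $F^*$ in $V$ to incorporate Skolem functions and collapsing maps for $\omega_1$, so that elementarity of $N$ forces $F^*$-values with arguments entirely in $N\cap\omega_2$ to contribute to $\omega_1$ only within $\delta$, while $F^*$-values involving elements of $B'$ are arranged to land in $\omega_2\setminus\omega_1$. The footnoted generalization to arbitrary outer models $W\supseteq V$ that add a real, and arbitrary $W$-regular $\kappa\ge\omega_2^W$, follows the same template: only the existence of a real in $W\setminus V$, not the specifics of $\text{Add}(\omega)$, is actually needed for the coding step, provided one replaces $\sigma$ with the new real throughout.
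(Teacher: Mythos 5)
The paper offers no proof of this Fact---it is quoted from Abraham--Shelah, Gitik, and Velickovic---so your sketch must stand on its own, and as written it has a genuine gap at the trace-control step. You take $N\in V$ with $N\cap\omega_1=\delta$, adjoin ordinals $\eta_n\in\omega_2\setminus N$ fixed in $V$, and form the $F^*$-closure; for the resulting $z$ to satisfy $z\cap\omega_1=\delta$ you need $\text{cl}_{F^*}\bigl((N\cap\omega_2)\cup b\bigr)\cap[\delta,\omega_1)=\emptyset$ for the relevant finite $b\subseteq\{\eta_n\}$. Your proposed fix---that $F^*$-values involving elements of $B'$ can be ``arranged to land in $\omega_2\setminus\omega_1$''---is not available: $F$ is given adversarially, and nothing prevents $F(\eta)$ from being an ordinal in $[\delta,\omega_1)$ for every $\eta$ above $\sup(N\cap\omega_2)$. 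More seriously, the existence of even one $\eta\in\omega_2\setminus N$ whose adjunction to $N$, followed by closure under a rich algebra, does not raise the trace on $\omega_1$ is precisely the end-extension property of Strong Chang's Conjecture for the model $N$---the very principle this paper shows to carry large-cardinal strength, and which fails for club-many $N$ in $L$ (where CC fails). Since the Fact is a theorem about Cohen extensions of an arbitrary ground model, a correct proof cannot demand such end-extensions inside $V$. The Gitik--Velickovic argument avoids this by choosing the ambient countable model $M\prec H_\theta^{V[\sigma]}$ \emph{in the extension}, with $M\cap\omega_1=\delta\in S$ (possible because $S$ is stationary there), and building $z$ as an increasing union of countable $V$-sets all contained in $M\cap\omega_2$; since $F^*\in M$ and $F^*$ takes countable values, containment in $M$ bounds the trace by $\delta$ with no Chang-type hypothesis.

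The freeness step has a related problem. Fixing $\langle\eta_n\rangle$ in advance and demanding $z\cap\{\eta_n\}=B'$ requires $\{\eta_n\}$ to be free over $N$ for the algebra in the strong sense that no $\eta_m$ lies in the closure of $(N\cap\omega_2)$ together with any finite subset of the remaining $\eta_n$'s. The almost-disjoint-family remark does not deliver this: the closure you must avoid depends on $B'$, hence on $\sigma$, so you would need a single $V$-set simultaneously free for all $2^{\omega}$ candidate closures---a free-subset property for set mappings of finite order on $\omega_2$ that requires proof and, combined with the trace requirement above, fails outright in $L$. In the actual argument the coding ordinals are not fixed in advance: they are chosen recursively during the construction, each new candidate picked inside $M$ and above the supremum of the closure built so far, so that decodability is secured interval by interval (compare the three-sequence bookkeeping in the proof of Claim \ref{clm_EndExtension}, where the author needs $\text{SCC}^{\text{cof}}_{\text{gap}}$ precisely because he insists, as you do, on building the models inside $V$ over a $V$-model $N$). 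Your opening ccc reduction to $F^*$ and $A$ is fine, but the architecture ``fix all data in $V$, take one closure'' cannot be completed.
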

In fact Friedman-Krueger~\cite{MR2276627} proved that it always satisfies a stronger (and RCS-iterable) condition of Shelah which is intermediate between ``preserves stationary subsets of $\omega_1$" and ``semiproper".  They asked:
\begin{question}[Question 1 of Friedman-Krueger~\cite{MR2276627}]\label{q_F_K_question}
Assuming Martin's Maximum, the poset $\text{Add}(\omega) * \dot{\mathbb{C}} \big( [\omega_2]^\omega - V  \big) $ is semiproper.  Is this poset semiproper in general?
\end{question}

We give a strong negative answer to Question \ref{q_F_K_question}, which we now describe.  Foreman-Magidor~\cite{MR1359154} defined a poset to be \emph{reasonable} iff it preserves the stationarity of $\big( [\theta]^\omega \big)^V$ for all $\theta \ge \omega_1$; this is a weak version of proper forcings (which are required to preserve \emph{all} stationary subsets of $[\theta]^\omega$).  Intuitively, a nonreasonable poset is as non-proper as possible while (possibly) preserving $\omega_1$; it kills the stationarity of the former club $[\theta]^\omega$ for some $\theta$.  In the following results it will be useful to stratify the notion of reasonableness; let us say that a poset is \emph{reasonable at $[\theta]^\omega$} if it preserves the stationarity of $\big( [\theta]^\omega \big)^V$, and \emph{nonreasonable at $[\theta]^\omega$} otherwise.  So in particular, the poset from Question \ref{q_F_K_question} is nonreasonable at $[\omega_2]^\omega$.  Notice that any $\omega_1$-preserving poset is reasonable at $[\omega_1]^\omega$; \textbf{so for $\omega_1$-preserving posets, the strongest possible degree of nonreasonableness is to be nonreasonable at $[\omega_2]^\omega$}.  Namba forcing is always nonreasonable at $[\omega_2]^\omega$, and so is the poset \eqref{eq_GitikPoset}; however the latter preserves all uncountable cofinalities because $\dot{\mathbb{C}} \big( [\omega_2]^\omega - V  \big)$ is forced by $\text{Add}(\omega)$ to be $\sigma$-distributive.

We prove that semiproperness of the poset from the Friedman-Krueger question, and semiproperness of nonreasonable posets in general, are closely related to strong versions of Chang's Conjecture.  The definitions of $\text{SCC}$, $\text{SCC}^{\text{cof}}$, and $\text{SCC}^{\text{cof}}_{\text{gap}}$ are given in Section \ref{sec_Prelims}.  

\begin{theorem}\label{thm_MainTheorem}
If there exists a semiproper poset which is nonreasonable at $[\omega_2]^\omega$, then Strong Chang's Conjecture (SCC) holds.
\end{theorem}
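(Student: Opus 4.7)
The plan is to derive the standard combinatorial form of SCC: for every sufficiently large regular $\theta$ and every countable $M \prec H_\theta$ containing the relevant parameters, there is a countable $N \prec H_\theta$ with $M \subseteq N$, $N \cap \omega_1 = M \cap \omega_1$, and $M \cap \omega_2 \subsetneq N \cap \omega_2$. Since the set of $M$ containing any fixed countable list of parameters is club in $[H_\theta]^\omega$, this suffices.

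Let $\mathbb{P}$ be semiproper and nonreasonable at $[\omega_2]^\omega$. By the standard fact that every club in $[\omega_2]^\omega$ contains the countable closure points of some function $[\omega_2]^{<\omega} \to \omega_2$, fix a $\mathbb{P}$-name $\dot{F}$ forced to have the property that every countable $\dot F$-closed subset of $\omega_2$ in $V^{\mathbb{P}}$ lies outside $V$. Fix a countable $M \prec H_\theta$ with $\{\mathbb{P}, \dot F\} \subseteq M$, and by semiproperness choose an $(M,\mathbb{P})$-semigeneric condition $q$. For $V$-generic $G \ni q$, the interpretation $M[G] := \{\dot x^G : \dot x \in M\}$ is an elementary submodel of $H_\theta^{V[G]}$, semigenericity gives $M[G] \cap \omega_1 = M \cap \omega_1$, and since $\dot F \in M$ the set $M[G] \cap \omega_2$ is $F^G$-closed and countable, hence not in $V$. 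So $M[G] \cap \omega_2 \supsetneq M \cap \omega_2$, and we fix $\alpha \in (M[G] \cap \omega_2) \setminus M$.

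For any $f \in M$ that is a function $\omega_2 \to \omega_1$, elementarity of $M[G]$ in $H_\theta^{V[G]}$ yields $f(\alpha) \in M[G] \cap \omega_1 = M \cap \omega_1$. So in $V[G]$ the following statement holds: there exists an ordinal $\alpha < (\omega_2)^V$ with $\alpha \notin M$ and $f(\alpha) \in M$ for every $f \in M \cap ({}^{\omega_2}\omega_1)^V$. This is a bounded existence statement whose parameters ($(\omega_2)^V$, $M$, and the countable set $M \cap ({}^{\omega_2}\omega_1)^V$) all lie in $V$ and whose defining condition on a specific ordinal is $\Delta_0$ in those parameters. Any $V[G]$-witness $\alpha$ is automatically a $V$-ordinal, and the $\Delta_0$ condition is absolute, so such $\alpha$ exists already in $V$.

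Let $N$ be the Skolem hull in $H_\theta$ (equipped with a fixed global well-order) of $M \cup \{\alpha\}$. Then $M \subseteq N \prec H_\theta$, and $\alpha \in N \setminus M$ gives $N \cap \omega_2 \supsetneq M \cap \omega_2$. For any $\beta \in N \cap \omega_1$, write $\beta = \tau(\alpha, \vec m)$ for a Skolem term $\tau$ and $\vec m \in M$; replacing $\tau(\cdot, \vec m)$ by $0$ outside $\omega_1$ yields a function $f_\tau \in M$ from $\omega_2$ to $\omega_1$, and our property of $\alpha$ gives $\beta = f_\tau(\alpha) \in M$. Hence $N \cap \omega_1 = M \cap \omega_1$, so $N$ witnesses SCC for $M$. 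The main subtleties I expect to require care are the absoluteness step (verifying cleanly that the $V[G]$-witness $\alpha$ pulls back to $V$) and confirming that a suitable $\dot F$ really can be extracted from nonreasonableness; the forcing-theoretic bookkeeping for $M[G]$ and the Skolem-hull computation are routine.
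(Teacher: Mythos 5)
Your argument is correct, and at its core it is the same as the paper's: use the failure of reasonableness at $[\omega_2]^\omega$ to produce, in the extension, a countable $\sqsubseteq$-end-extension of $M$ containing a new ordinal $\alpha<\omega_2^V$, and then take $\mathrm{Sk}^{\mathfrak{A}}(M\cup\{\alpha\})$ back in $V$. The differences are organizational. The paper proves a more general statement (Theorem \ref{thm_MainTheorem_Generalized}): it works with an arbitrary outer model $W$ in which stationary sets of countable $V$-models remain \emph{semi}stationary and $([\omega_2]^\omega)^V$ is nonstationary, argues by contradiction from a stationary set $S$ of counterexamples, and finishes by observing that the hull $M'=\mathrm{Sk}^{\mathfrak{A}}(M\cup\{\zeta\})$ is contained in the $W$-model $N$, so $M\sqsubset M'$ without any absoluteness step. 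You instead argue directly from a semigeneric condition, and transfer the first-order property of $\alpha$ ("$f(\alpha)\in M$ for all $f\in M\cap{}^{\omega_2}\omega_1$") from $V[G]$ to $V$ by noting that the witness and all parameters live in $V$; that step is sound and is the natural substitute for the paper's $M'\subseteq N$ observation when one works with generic extensions only. Two small points of bookkeeping you should make explicit. First, nonreasonableness only guarantees that \emph{some} condition $p_0$ forces $([\omega_2]^\omega)^V$ to be nonstationary, so $\dot F$ and $p_0$ should be put into $M$ and the semigeneric condition taken below $p_0$. Second, your target conclusion "$M\cap\omega_2\subsetneq N\cap\omega_2$" only matches Definition \ref{def_SCC} (which demands $N\cap[\sup(M\cap\omega_2),\omega_2)\neq\emptyset$) because both models are elementary in $(H_\theta,\in,\Delta)$ for a fixed wellorder, whence $N\cap\omega_2$ end-extends $M\cap\omega_2$; this is exactly the paper's Lemma \ref{lem_EndExtend}, and the passage from "club-many $M$" to "every $M$" is Lemma \ref{lem_WoodinLemma_CC_club}. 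With those two remarks supplied, your proof is complete.
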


\begin{theorem}\label{thm_SCC_star_implies}
The principle $\text{SCC}^{\text{cof}}_{\text{gap}}$ implies that
\[
\text{Add}(\omega) * \dot{\mathbb{C}} \big( [\omega_2]^\omega - V  \big) 
\]
 is semiproper.
\end{theorem}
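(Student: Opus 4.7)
The plan is to verify semiproperness of $\mathbb{Q} = \text{Add}(\omega) * \dot{\mathbb{C}}$ (with $\dot{\mathbb{C}} = \dot{\mathbb{C}}([\omega_2]^\omega - V)$) directly by a two-stage argument, handling the Cohen factor first and the nonreasonable factor second. Fix a large regular $\theta$, a countable $M \prec H_\theta$ lying in the stationary set witnessing $\text{SCC}^{\text{cof}}_{\text{gap}}$ with $\mathbb{Q} \in M$, and a condition $p = (\dot\sigma_0, \dot c_0) \in M \cap \mathbb{Q}$. Because $\text{Add}(\omega)$ is countable and hence proper, one can first pick $\sigma \le \dot\sigma_0$ that is $(M, \text{Add}(\omega))$-generic; then $M[\sigma] \cap \omega_1 = M \cap \omega_1$, and it suffices, working in $V[\sigma]$, to find $q \le (\dot c_0)^\sigma$ in $\mathbb{C} := (\dot{\mathbb{C}})^\sigma$ that is $(M[\sigma], \mathbb{C})$-semi-generic.

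I would invoke $\text{SCC}^{\text{cof}}_{\text{gap}}$ in $V$ to produce an $\in$-chain $M \subseteq N_0 \prec N_1 \prec \cdots \prec H_\theta^V$ of countable elementary submodels with $N_n \cap \omega_1 = M \cap \omega_1$ and with $\sup(N_n \cap \omega_2)$ strictly increasing in $n$ (this strict cofinal growth is the \emph{gap} content of the principle). Setting $N_\omega := \bigcup_n N_n$ (still countable), one arranges $\sigma$ to be generic over $N_\omega$ as well as $M$, which is possible since $\text{Add}(\omega)$ is countable, and standard arguments give $N_n[\sigma] \prec H_\theta^{V[\sigma]}$ with $N_n[\sigma] \cap \omega_1 = M \cap \omega_1$. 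Then in $V[\sigma]$ I would build, by recursion on $n$, a descending sequence $c_0 \ge c_1 \ge \cdots$ with $c_n \in N_n[\sigma] \cap \mathbb{C}$, where $c_{n+1}$ is obtained from $c_n$ by first passing through the $n$-th dense-open subset of $\mathbb{C}$ lying in $M[\sigma]$ (possible by elementarity of $N_{n+1}[\sigma] \supseteq M[\sigma]$), and then appending a new top $z_{n+1} \in [\omega_2]^\omega - V$ satisfying $N_n \cap \omega_2 \subseteq z_{n+1} \subseteq \sup(N_{n+1} \cap \omega_2)$ and $z_{n+1} \cap \omega_1 = M \cap \omega_1$. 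Such $z_{n+1}$ exists by Fact \ref{fact_Gitik}, since $[\omega_2]^\omega - V$ is projective stationary in $V[\sigma]$, and coding $\sigma$ into the $z_{n+1}$'s forces them out of $V$. Set $q$ to be the union of the $c_n$'s.

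To check that $q$ is $(M[\sigma], \mathbb{C})$-semi-generic, it is enough to show that for every $\mathbb{C}$-name $\dot\alpha \in M[\sigma]$ for a countable ordinal and every $r \le q$, there is $r' \le r$ deciding $\dot\alpha$ to a value in $M \cap \omega_1$. The idea is that $q$ is constructed so that any extension can be pushed inside $N_\omega[\sigma]$, and decisions taken there land in $N_\omega[\sigma] \cap \omega_1 = M \cap \omega_1$. The hardest part of the proof, I expect, is precisely this fusion-style verification: one must arrange $q$ to be ``cofinally generic'' over $N_\omega[\sigma]$, so that every $r \le q$ admits an extension in $N_\omega[\sigma]$. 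The \emph{gap} feature of $\text{SCC}^{\text{cof}}_{\text{gap}}$ is essential here, because without strict growth of $\sup(N_n \cap \omega_2)$ the chain of tops $z_n$ could stabilize, leaving no room to extend $r$ into the next $N_{n+1}[\sigma]$; with the gap in hand, there is always a fresh band of ordinals in $\sup(N_{n+1} \cap \omega_2) \setminus \sup(N_n \cap \omega_2)$ into which one can enlarge the top. A secondary but nontrivial point to verify is that $\text{SCC}^{\text{cof}}_{\text{gap}}$ (or at least the instance of it used) is preserved by the initial Cohen forcing, which is plausible since $\text{Add}(\omega)$ is c.c.c.\ and proper, but should be checked directly from the definition.
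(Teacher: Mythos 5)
Your overall frame (absorb $\text{Add}(\omega)$ with a master condition, then build a decreasing $\omega$-sequence in $\mathbb{C}$ guided by a $\sqsubseteq$-increasing tower of models supplied by the Chang principle) matches the paper's, but there is a genuine gap at the decisive step, and it is exactly the step the paper's proof is built around. When you ``set $q$ to be the union of the $c_n$'s'', continuity forces any extension of $q$ to take the value $z_\omega := \bigcup_n z_n$ at $\sup_n \operatorname{dom}(c_n)$; so $q$ is a usable condition only if $z_\omega \in [\omega_2]^\omega - V$. Knowing that each individual $z_{n+1}$ avoids $V$ --- which is automatic, since the values of any condition in $\mathbb{C}$ must lie in $[\omega_2]^\omega - V$ --- says nothing about the limit. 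Worse, you build the tower $\langle N_n\rangle$ in $V$: if $z_\omega$ comes out equal to $N_\omega \cap \omega_2$ (the natural outcome if $q$ is to be generic over $M[\sigma]\subseteq N_\omega[\sigma]$) and $\langle N_n : n<\omega\rangle \in V$, then $z_\omega \in V$ and the construction dies. Your one-line remedy, ``coding $\sigma$ into the $z_{n+1}$'s forces them out of $V$,'' is the right instinct but is precisely where all the work lies: a naive coding is not decodable, because the decoder would need the marker sequence $\sup(N_n\cap\omega_2)$, which is not recoverable from $z_\omega$ alone. The paper's Claim \ref{clm_EndExtension} (the heart of the proof) runs \emph{three} interleaved towers $Q^{\mathcal M}_n, Q^{\mathcal Y}_n, Q^{\mathcal N}_n$, activating one per stage according to the bits of $\sigma$, and uses the \emph{gap} clause of $\text{SCC}^{\text{cof}}_{\text{gap}}$ exactly so that the marker sequence $\vec\alpha$ is reconstructible inside any transitive $ZF^-$ model containing the three limit sets; $\sigma$ is then decodable from them, so at least one limit set $z^{\mathcal X^*}$ is not in $V$, and that tower is the one used to form the big model $M$.

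Two further corrections. You misidentify what the gap clause buys: $\text{SCC}^{\text{cof}}$ alone already yields strictly (indeed cofinally) increasing sups, so ``room to enlarge the top'' is not the issue; the gap is needed for the decoding just described, and the paper explicitly notes it is unclear whether $\text{SCC}^{\text{cof}}$ suffices. Also, the ``fusion-style verification'' you flag as the hardest part is in fact immediate: once one has a single countable $M \prec H_\theta[\sigma]$ with $N[\sigma]\sqsubseteq M$ and $M\cap\omega_2\notin V$, one takes an $(M,\mathbb{C})$-generic descending sequence inside $M$ and closes it off with top $M\cap\omega_1 \mapsto M\cap\omega_2$; the resulting single condition already decides every $\mathbb{C}$-name in $N[\sigma]$ for a countable ordinal to a value in $M\cap\omega_1=N\cap\omega_1$, so no argument about arbitrary $r\le q$ is required. (Your worry about preserving $\text{SCC}^{\text{cof}}_{\text{gap}}$ to $V[\sigma]$ is likewise moot: the paper applies the principle in $V$ to models lying in $V$; only the recursion selecting which model to extend takes place in $V[\sigma]$.)
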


We also prove the following theorem, which is a minor modification of an argument of Sakai~\cite{MR2191239}:
\begin{theorem}[after \cite{MR2191239}]\label{thm_GeneralizeSakai}
Assume there exists a normal ideal $\mathcal{J}$ on $\omega_2$ such that $\wp(\omega_2)/\mathcal{J}$ is a proper forcing.  Then $\text{SCC}^{\text{cof}}_{\text{gap}}$ holds.
\end{theorem}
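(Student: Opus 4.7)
The plan is to adapt Sakai's argument from \cite{MR2191239}, where the analogous conclusion was deduced from saturation of an ideal on $\omega_2$; here we weaken saturation to properness of $\mathbb{Q} := \wp(\omega_2)/\mathcal{J}$. The central tool is the generic ultrapower $j_G : V \to V^* \subseteq V[G]$ arising from a $V$-generic $G \subseteq \mathbb{Q}$: by normality, $\mathrm{crit}(j_G) = \omega_2$, and the ordinal $[\mathrm{id}]_G \in V^*$ satisfies $\omega_2 \le [\mathrm{id}]_G < j_G(\omega_2)$ together with $[\mathrm{id}]_G \in j_G(A)$ for every $A \in G$.

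Given a sufficiently large regular $\theta$, a countable $M \prec H_\theta$ containing the relevant parameters, and a $\mathcal{J}$-positive $A \in M$ encoding the prescribed ``gap'' region, the first step is to use properness to fix an $(M, \mathbb{Q})$-generic condition $q \le A$, and to work with a $V$-generic $G \ni q$. In $V[G]$, form the Skolem hull $H^*$ of $j_G[M] \cup \{[\mathrm{id}]_G\}$ inside $H_{j_G(\theta)}^{V^*}$. The key computation: every element of $H^*$ has the form $t^{V^*}(j_G(\vec m), [\mathrm{id}]_G)$, which by the defining property of $[\cdot]_G$ is represented by the function $F_{t, \vec m}(\alpha) := t^V(\vec m, \alpha)$ lying in $M$. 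Since $(M, \mathbb{Q})$-genericity of $q$ ensures that every $\mathbb{Q}$-name in $M$ for an ordinal below $\omega_1$ is decided inside $M$, we obtain $H^* \cap \omega_1 = M \cap \omega_1$, while $H^*$ properly extends $j_G[M]$ via $[\mathrm{id}]_G \in j_G(A)$.

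The second step descends from $V[G]$ back to $V$ via a pressing-down argument. Suppose toward contradiction that for every $\alpha \in A$ in $V$, the Skolem hull of $M \cup \{\alpha\}$ inside $H_\theta$ contains a countable ordinal outside of $M$. By $\sigma$-completeness of $\mathcal{J}$, a single Skolem-term/parameter pair $(t, \vec m)$ with $\vec m \in M^{<\omega}$ exhibits this on some $\mathcal{J}$-positive $B \subseteq A$; normality of $\mathcal{J}$ yields a constant value $\beta \in \omega_1 \setminus M$ of $F_{t, \vec m}$ on a further $\mathcal{J}$-positive $B' \subseteq B$. Any generic $G'$ containing both $[B']$ and an $(M, \mathbb{Q})$-generic condition then forces $[F_{t, \vec m}]_{G'} = \beta \notin M$, contradicting the first-step conclusion that the analogous Skolem hull satisfies $H^* \cap \omega_1 = M \cap \omega_1$.

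The main obstacle is arranging, for this second-step generic $G'$, the compatibility of the external condition $[B']$ with an $(M, \mathbb{Q})$-generic condition, since $B' \notin M$. In Sakai's saturated setting the analogue is automatic because every $\mathcal{J}$-positive set supports a master condition; under mere properness the needed compatibility must be arranged by hand, noting that properness below conditions in $\mathbb{Q} \cap M$ is sufficient once $A$ is chosen finely enough to capture the prescribed gap parameter. This careful choice of $A$, together with the observation that the first-step argument only uses $(M, \mathbb{Q})$-genericity rather than saturation, is precisely the ``minor modification'' alluded to in the theorem statement.
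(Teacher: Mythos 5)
Your skeleton (generic ultrapower, master conditions obtained from properness, pressing down via normality) is the right one, but as written the argument only proves $\text{SCC}^{\text{cof}}$ --- essentially Sakai's original theorem, under a stronger hypothesis --- and not $\text{SCC}^{\text{cof}}_{\text{gap}}$. Throughout you only control the \emph{countable} ordinals of the hull: your contradiction hypothesis is that $\text{Sk}^{\mathfrak{A}}(M\cup\{\alpha\})$ contains a countable ordinal outside $M$, and the only consequence of genericity you invoke is that names in $M$ for ordinals below $\omega_1$ are decided inside $M$. That yields $M \sqsubseteq \text{Sk}^{\mathfrak{A}}(M\cup\{\alpha\})$ for positively many $\alpha$, but says nothing about the gap requirement $\text{Sk}^{\mathfrak{A}}(M\cup\{\alpha\}) \cap [\sup(M\cap\omega_2),\alpha) = \emptyset$, which is the entire new content of the theorem. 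This cannot be ``encoded'' by a clever choice of the positive set $A$: it is a property of the hull relative to the varying $\alpha$, and by Lemma \ref{lem_NRladder} it genuinely can fail even when $M \sqsubseteq \text{Sk}^{\mathfrak{A}}(M\cup\{\alpha\})$. The repair is to press down on \emph{all} values below $\alpha$: work with $A_M := \{\alpha < \omega_2 \ : \ \forall f\in M\ (f(\alpha)<\alpha \Rightarrow f(\alpha)\in M)\}$ and show $A_M \in \mathcal{J}^+$. Your own mechanism extends verbatim: if $j(f)(\kappa) = \beta < \kappa$ then $f^{-1}[\{\beta\}] \in U$ by normality, the canonical name for the unique such $\beta$ lies in $M$, and the master condition gives $M[U]\cap\mathrm{Ord} = M\cap\mathrm{Ord}$, hence $\beta \in M$ --- for $\beta$ of any size below $\kappa$, not merely below $\omega_1$. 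This is precisely where \emph{properness} (rather than the semiproperness Sakai assumed) earns its keep; you assume the proper hypothesis but extract only semiproper-strength information from it.

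Separately, the step you flag as the ``main obstacle'' is left unresolved, and it does not resolve itself. Having fixed $M$ first and produced the positive set $B' \notin M$, you need a single generic containing both $[B']$ and an $(M,\mathbb{Q})$-generic condition; properness only supplies $(M,\mathbb{Q})$-generic conditions below conditions \emph{in} $M$, so there is no reason such a generic exists, and ``choosing $A$ finely enough'' cannot manufacture one. The correct move is to reverse the quantifiers: assume the set $S$ of bad models is stationary, let $U$ be any generic for $\wp(\omega_2)/\mathcal{J}$, and apply Fact \ref{fact_includeMasterCondition} (properness implies $U$ contains a master condition for stationarily many $N \in S$) to choose the model \emph{after} the generic is fixed. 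With that reversal, $A_N \in \mathcal{J}$ gives $\kappa \notin j(A_N)$ directly, the two steps of your argument collapse into one computation, and the contradiction with the master condition is immediate.
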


Now by Jech-Magidor-Mitchell-Prikry~\cite{MR560220}, an ideal satisfying the hypothesis of Theorem \ref{thm_GeneralizeSakai} can be forced from a measurable cardinal.\footnote{In fact one can arrange that the quotient is forcing equivalent to a $\sigma$-closed poset; and moreover the ideal can consistently be the nonstationary ideal on $\omega_2$ restricted to ordinals of uncountable cofinality.} Moreover SCC implies Chang's Conjecture (CC) which is equiconsistent with an $\omega_1$-Erd\H{o}s
 cardinal.    So the results above have the following corollary:

\begin{corollary}
\begin{eqnarray*}
& \text{CON}\Big(ZFC \ + \ \text{there is a measurable cardinal}\Big) \\
\implies &  \text{CON}\Big(ZFC \ + \ \text{ the poset }\text{Add}(\omega) * \dot{\mathbb{C}}\big( [\omega_2]^\omega - V \big) \text{ is semiproper} \Big) \\
\implies & \text{CON}\Big(ZFC \ + \ \text{there is an } \omega_1 \text{-Erd\H{o}s
 cardinal} \Big). \\
\end{eqnarray*}
\end{corollary}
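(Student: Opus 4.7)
The plan is to chain together the three theorems and the cited results to get two implications in a row, one for each arrow in the corollary.

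For the first arrow, I would start from a model with a measurable cardinal and invoke Jech-Magidor-Mitchell-Prikry~\cite{MR560220} to produce (in a forcing extension) a normal ideal $\mathcal{J}$ on $\omega_2$ such that $\wp(\omega_2)/\mathcal{J}$ is proper. Theorem~\ref{thm_GeneralizeSakai} then gives $\text{SCC}^{\text{cof}}_{\text{gap}}$ in that extension, and Theorem~\ref{thm_SCC_star_implies} converts this into semiproperness of $\text{Add}(\omega) * \dot{\mathbb{C}}\big([\omega_2]^\omega - V\big)$. So the forcing extension witnesses the consistency of ZFC together with semiproperness of the poset in question.

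For the second arrow, assume we have a model in which $\text{Add}(\omega) * \dot{\mathbb{C}}\big([\omega_2]^\omega - V\big)$ is semiproper. As noted in the introduction, Fact~\ref{fact_Gitik} implies that this poset is nonreasonable at $[\omega_2]^\omega$ (the second step kills the stationarity of $\big([\omega_2]^\omega\big)^V$). Hence the hypothesis of Theorem~\ref{thm_MainTheorem} is satisfied, and we conclude that SCC holds in our model. Since SCC implies ordinary Chang's Conjecture CC, and CC is well known (by the classical equiconsistency result) to imply the consistency of an $\omega_1$-Erd\H{o}s cardinal, we obtain a model (inner model) with an $\omega_1$-Erd\H{o}s cardinal, witnessing the stated consistency.

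There is essentially no obstacle, since each step is either already proved in the paper or quoted from the literature; the only care needed is verifying that the poset of interest really falls under the hypothesis of Theorem~\ref{thm_MainTheorem}, i.e.\ that it is nonreasonable at $[\omega_2]^\omega$, which is immediate from the fact that the second factor shoots a club through the complement of $\big([\omega_2]^\omega\big)^V$ and hence destroys its stationarity. The rest is just citing Jech-Magidor-Mitchell-Prikry for the upper bound, and the equiconsistency of CC with an $\omega_1$-Erd\H{o}s cardinal for the lower bound.
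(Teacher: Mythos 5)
Your proposal is correct and follows essentially the same route as the paper: Jech--Magidor--Mitchell--Prikry plus Theorems \ref{thm_GeneralizeSakai} and \ref{thm_SCC_star_implies} for the first implication, and Theorem \ref{thm_MainTheorem} plus the equiconsistency of CC with an $\omega_1$-Erd\H{o}s cardinal for the second. The one point you single out for care---that $\text{Add}(\omega) * \dot{\mathbb{C}}\big([\omega_2]^\omega - V\big)$ is nonreasonable at $[\omega_2]^\omega$ because the generic continuous chain is a club of $[\omega_2^V]^\omega$ disjoint from $\big([\omega_2]^\omega\big)^V$---is exactly the fact the paper asserts in its introduction, so nothing is missing.
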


We can also draw another corollary from Theorem \ref{thm_GeneralizeSakai} and core model theory.   By Foreman-Magidor-Shelah~\cite{MR924672}, the $\dagger$ principle implies that $\text{NS}_{\omega_1}$ is precipitous; and $\dagger$ implies semiproperness of Namba forcing which in turn (by Shelah's Theorem \ref{thm_ShelahThm}) is equivalent to $\text{SCC}^{\text{cof}}$.  In light of these facts, a natural question is whether $\text{SCC}^{\text{cof}}$ implies precipitousness of $\text{NS}_{\omega_1}$.  It does not; not even the stronger $\text{SCC}^{\text{cof}}_{\text{gap}}$ implies there is a precipitous ideal on $\omega_1$: 
\begin{corollary}\label{cor_gaps_cons_NoPrecip}
The principle $\text{SCC}^{\text{cof}}_{\text{gap}}$ (the strongest of the Chang's Conjecture variations considered in this paper) does \textbf{not} imply that there is a precipitous ideal on $\omega_1$.
\end{corollary}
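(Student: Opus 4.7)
The plan is to exhibit a single model in which $\text{SCC}^{\text{cof}}_{\text{gap}}$ holds but no precipitous ideal on $\omega_1$ exists; the model will be constructed by forcing over a ground model whose large-cardinal structure is just enough for Theorem \ref{thm_GeneralizeSakai}. Concretely, I would let $V = L[U]$ with $U$ a normal measure on a measurable cardinal $\kappa$, so that $V$ contains exactly one measurable cardinal. Then force over $V$ with the Jech-Magidor-Mitchell-Prikry poset---for example a suitable form of the $\omega_1$-closed Levy collapse $\text{Col}(\omega_1, {<}\kappa)$---to produce an extension $V[G]$ in which $\omega_1^{V[G]} = \omega_1^V$, $\omega_2^{V[G]} = \kappa$, and there is a normal ideal $\mathcal{J}$ on $\omega_2$ whose quotient $\wp(\omega_2)/\mathcal{J}$ is $\sigma$-closed (hence proper), using the stronger JMMP conclusion recorded in the footnote to Theorem \ref{thm_GeneralizeSakai}. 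Theorem \ref{thm_GeneralizeSakai} then immediately yields $V[G] \models \text{SCC}^{\text{cof}}_{\text{gap}}$.

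Next, suppose toward contradiction that $V[G]$ carries a precipitous ideal $I$ on $\omega_1^{V[G]}$. Forcing with $\wp(\omega_1^{V[G]})/I$ over $V[G]$ produces a generic ultrafilter $G^*$ whose generic ultrapower $\text{Ult}(V[G], G^*)$ is well-founded and contains $\omega_1^{V[G]}$ as a measurable cardinal. By the Jech-Magidor-Mitchell-Prikry lower-bound theorem for precipitous ideals, this forces $\omega_1^{V[G]}$ to be measurable in the core model $K^{V[G][G^*]}$. But set forcing preserves $K$ at the level of one measurable, so $K^{V[G][G^*]} = K^{V[G]} = K^V = L[U]$; and $L[U]$ has exactly one measurable, namely $\kappa = \omega_2^{V[G]} > \omega_1^{V[G]}$. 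This contradiction completes the argument.

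The hard part is the invocation of core-model theory in the previous paragraph. One needs to know both that $K$ is preserved under the JMMP forcing (and under the further forcing by $\wp(\omega_1)/I$), and that the Jech-Magidor-Mitchell-Prikry lower bound really produces a measurable in $K$ rather than merely in some less canonical inner model. Both are standard in the Dodd-Jensen/Mitchell theory of the core model up to one measurable, but must be invoked carefully; once they are in hand, the rest is a routine assembly of Theorem \ref{thm_GeneralizeSakai} over a ground model whose only large cardinal is the measurable $\kappa$ that becomes $\omega_2$ in the extension.
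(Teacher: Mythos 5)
Your proposal is correct and follows essentially the same route as the paper: collapse a measurable to $\omega_2$ with $\text{Col}(\omega_1,{<}\kappa)$, use Jech--Magidor--Mitchell--Prikry plus Theorem \ref{thm_GeneralizeSakai} to get $\text{SCC}^{\text{cof}}_{\text{gap}}$, and rule out a precipitous ideal on $\omega_1$ via generic absoluteness of the core model and the theorem that such an ideal makes $\omega_1$ measurable in $K$. The only (cosmetic) difference is that the paper forces over $K$ itself under $\neg 0^{\P}$ rather than over $L[U]$, which sidesteps the step where you must verify $K^{L[U]}=L[U]$ for the appropriate core model.
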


Section \ref{sec_Prelims} provides the relevant background.  Section \ref{sec_MM} examines the relationship between Martin's Maximum, $\dagger$, and the principle $\text{SCC}^{\text{cof}}_{\text{gap}}$.  Section \ref{sec_LowerBound} proves Theorem \ref{thm_MainTheorem}; in fact a stronger theorem is proved there.  Section \ref{sec_UpperBound} proves Theorems \ref{thm_SCC_star_implies} and \ref{thm_GeneralizeSakai}.  Section \ref{sec_Cor_NS_omega_1} proves Corollary \ref{cor_gaps_cons_NoPrecip}.  Section \ref{sec_ConcludingRemarks} provides some concluding remarks about the relationship between the various strong Chang's Conjectures and special Aronszajn trees on $\omega_2$, and the relationship between bounded dagger principles and semiproperness of the poset \eqref{eq_GitikPoset}.

\section{Preliminaries}\label{sec_Prelims}

If $M$ and $N$ are sets which have transitive intersection with $\omega_1$, we write $M \sqsubseteq N$ to mean that $M \subseteq N$ and $M \cap \omega_1 = N \cap \omega_1$.  A poset $\mathbb{Q}$ is \emph{semiproper} iff for all sufficiently large $\theta$ and club-many (equivalently, every) countable $M \prec (H_\theta,\in,\mathbb{Q})$ and every $q \in M \cap \mathbb{Q}$ there is a $q' \le q$ such that
\[
q' \Vdash \check{M} \sqsubseteq \check{M}[\dot{G}].
\]

We frequently use the following fact (see e.g.\ Larson-Shelah~\cite{MR2030084}):
\begin{fact}\label{fact_Hulls}
If $\theta$ is regular uncountable, $\mathfrak{A}$ is a structure on $H_\theta$ in a countable language which has definable Skolem functions, $M \prec \mathfrak{A}$, and $Y$ is a subset of some $\eta \in M$, then 
\[
\text{Sk}^{\mathfrak{A}}(M \cup Y) = \{ f(y) \ | \  y \in [Y]^{<\omega} \text{ and }  f \in M \cap {}^{[\eta]^{<\omega}} H_\theta \}.
\]
\end{fact}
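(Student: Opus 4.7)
The plan is to prove the two inclusions separately; both follow from the definition of the Skolem hull together with the standard properties of definable Skolem functions, and the argument is essentially syntactic.

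First I would handle the easy inclusion $\supseteq$: given $f \in M \cap {}^{[\eta]^{<\omega}} H_\theta$ and $y \in [Y]^{<\omega}$, each element of $y$ lies in $Y$ and hence in $\text{Sk}^{\mathfrak{A}}(M \cup Y)$; the finite set $y$ itself is obtainable from these elements by iterated application of the pairing operation (definable in $\mathfrak{A}$); and $f \in M$ is already in the hull. Closure of the hull under the definable application map $(g,x) \mapsto g(x)$ then gives $f(y) \in \text{Sk}^{\mathfrak{A}}(M \cup Y)$.

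For the nontrivial inclusion $\subseteq$, I would take an arbitrary $a \in \text{Sk}^{\mathfrak{A}}(M \cup Y)$ and write it as $a = t(m_1,\ldots,m_k,y_1,\ldots,y_l)$ for some Skolem term $t$, parameters $m_i \in M$, and $y_j \in Y$. Using the natural well-order on $\eta$ (definable in $\mathfrak{A}$), I define $f : [\eta]^{<\omega} \to H_\theta$ by
\[
f(z) \;=\; t\bigl(m_1,\ldots,m_k,z_{(1)},\ldots,z_{(|z|)}\bigr),
\]
where $z_{(1)} < \cdots < z_{(|z|)}$ enumerates $z$ in the induced order (with some fixed convention when the arity of $t$ does not match $|z|$, e.g.\ returning a default value). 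This $f$ lies in $H_\theta$, is definable in $\mathfrak{A}$ from the parameters $\eta,m_1,\ldots,m_k$, all of which are in $M$, so by elementarity $f \in M$. Setting $y = \{y_1,\ldots,y_l\} \in [Y]^{<\omega}$ (rearranging the $y_j$ into $\eta$-increasing order and absorbing the permutation into the choice of $t$), we obtain $f(y) = a$, which is the required representation.

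The only delicate point is the bookkeeping between finite tuples, which naturally enter through Skolem terms, and finite subsets of $\eta$, which form the domain of $f$; this is handled by fixing the natural well-order on $\eta$ and enumerating subsets in increasing order. I do not expect any genuine obstacle beyond this.
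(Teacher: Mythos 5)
Your argument is correct. The paper states this as a background Fact and gives no proof of its own (it just cites Larson--Shelah), and your two-inclusion argument is exactly the standard one that is being taken for granted: the $\supseteq$ direction from closure of the hull under definable pairing and application, and the $\subseteq$ direction by converting a Skolem term $t(\vec m, \vec y)$ into a function $f \in M$ on $[\eta]^{<\omega}$ via the increasing enumeration of a finite subset of $\eta$. The bookkeeping you flag (permutations and possible repetitions among the $y_j$, absorbed into the choice of term, plus the observation that $f \in H_\theta$ because $\theta$ is regular and $|\eta| < \theta$) is all that is needed.
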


The classic Chang's Conjecture, which we will abbreviate by CC, has many equivalent formulations.\footnote{CC is often  expressed by $(\omega_2, \omega_1) \twoheadrightarrow (\omega_1, \omega)$}  One version states:  for every $\theta \ge \omega_2$ and every algebra $\mathfrak{A}$ on $H_\theta$, there is an $X \prec \mathfrak{A}$ with $|X \cap \omega_2| \ge \omega_1$ and $X \cap \omega_1 \in \omega_1$.

We will refer to several strengthenings of Chang's Conjecture.  \textbf{We caution the reader that the notation for various strengthenings of CC is very inconsistent across the literature}.  For example:
\begin{itemize}
 \item The notation $\text{CC}^*$ is used in the literature to refer to at least four distinct concepts (which are not known to be equivalent, as far as the author is aware).   The $\text{CC}^*$ from Todorcevic-Torres Perez~\cite{MR2965421} is what we are calling $\text{SCC}^{\text{cof}}_{\text{gap}}$, whereas the apparently weaker $\text{CC}^*$ from Usuba~\cite{MR3248209} and Torres Perez-Wu~\cite{MR3431031} is what we are calling $\text{SCC}^{\text{cof}}$.  The $\text{CC}^*$ from Todorcevic~\cite{MR1261218} is what we are calling $\text{SCC}$.  The $\text{CC}^*$ from Doebler-Schindler~\cite{MR2576698} is yet another  version which is much stronger and will not be considered here.\footnote{ Doebler-Schindler~\cite{MR2576698} proved that their version implies $\dagger$, which by Usuba~\cite{MR3248209} implies presaturation of $\text{NS}_{\omega_1}$.  Thus by Steel~\cite{MR1480175} and Jensen-Steel~\cite{MR3135495}, the Doebler-Schindler version of $\text{CC}^*$ has consistency strength at least a Woodin cardinal; whereas all the versions of Chang's Conjecture considered in this paper can be forced from a measurable cardinal.} 
 \item ``Strong Chang's Conjecture" from Woodin~\cite{MR2723878} is not the same as ``Strong Chang's Conjecture" from Sharpe-Welch~\cite{MR2817562} (Woodin's is what we call $\text{SCC}$, and Sharpe-Welch's is what we call $\text{SCC}^{\text{cof}}$).  A similar discrepancy appears in the use of the notation $\text{CC}^+$ in \cite{MR2723878} and \cite{MR2817562}, though we will not deal with either of these versions.   The ``Strong Chang's Conjecture" of Foreman-Magidor-Shelah~\cite{MR924672} is apparently weaker than the ``Strong Chang's Conjecture" of Woodin~\cite{MR2723878}.
\end{itemize}

Table \ref{table_Translate_SCC} provides a translation for the various uses in the literature.

\begin{definition}\label{def_SCC} 
\emph{Strong Chang's Conjecture} (SCC) is the statement:  for all sufficiently large regular $\theta$, all wellorders $\Delta$ of $H_\theta$, and all countable $M \prec (H_\theta,\in,\Delta)$, there exists a $\widetilde{M}$ such that 
\begin{itemize}
 \item $\widetilde{M}  \prec (H_\theta,\in,\Delta)$; 
 \item $M \sqsubset \widetilde{M}$; and
 \item $\widetilde{M} \cap [\text{sup}(M \cap \omega_2), \omega_2) \ne \emptyset$.
\end{itemize}
\end{definition}

SCC implies CC.  In fact, SCC is equivalent to saying that club-many $M \in [H_\theta]^\omega$ can be $\sqsubset$-extended to a model whose intersection with $\omega_2$ is uncountable; whereas CC is equivalent to this holding for just stationarily many $M \in [H_\theta]^\omega$.   SCC is strictly stronger than CC because SCC implies $2^\omega \le \omega_2$, whereas CC places no bound on the continuum (see Section 2 of Todorcevic~\cite{MR1261218}).

We will use even further strengthenings of SCC.  The following requires that one can not only obtain proper end-extensions (as SCC requires), but that an end-extension with arbitrarily large supremum below $\omega_2$ can be found:

\begin{definition}\label{def_CC_star}
$\text{SCC}^{\text{cof}}$ is the statement: for all sufficiently large regular $\theta$ and every wellorder $\Delta$ of $H_\theta$ and every countable $N \prec (H_\theta,\in,\Delta)$, there are cofinally many $\alpha \in \omega_2$ such that there exists an $N' \prec (H_\theta,\in, \Delta \})$ where:
\begin{enumerate}
 \item $N \sqsubseteq N'$;
 \item $N' \cap [\alpha,\omega_2) \ne \emptyset$.
\end{enumerate}

\end{definition}

Finally, the strongest version we will encounter is the following, which requires arbitrarily large gaps above the model to be $\sqsubset$-extended:
\begin{definition}
$\text{SCC}^{\text{cof}}_{\text{gap}}$ is defined exactly the same as $\text{SCC}^{\text{cof}}$, except the following additional requirement is placed on the $N'$ from Definition \ref{def_CC_star}:
\[ 
N' \cap [ \text{sup}(N\cap \omega_2), \alpha) = \emptyset.
\]
\end{definition}

The additional requirement for $\text{SCC}^{\text{cof}}_{\text{gap}}$ will be important in the proof of Theorem \ref{thm_SCC_star_implies}.  The following implications are straightforward:
\[
\text{SCC}^{\text{cof}}_{\text{gap}} \implies \text{SCC}^{\text{cof}} \implies \text{SCC} \implies \text{CC}
\]


\begin{table}
\caption{Translating strong versions of Chang's Conjecture}\label{table_Translate_SCC}
\scalebox{0.7}{
\begin{tabular}{ |l|l||c|c|c| }
\hline
\multicolumn{2}{|c||}{Elsewhere in the literature} & \multicolumn{3}{|c|}{Corresponds to our:}\\
\hline
Source & Their notation & SCC & $\text{SCC}^{\text{cof}}$ & $\text{SCC}^{\text{cof}}_{\text{gap}}$ \\
\hline
\hline
Todorcevic~\cite{MR1261218} & $\text{CC}^*$ & \checkmark  & &  \\
Todorcevic-Torres Perez~\cite{MR2965421} & $\text{CC}^*$ &   & & \checkmark \\
Usuba~\cite{MR3248209} & $\text{CC}^*$ & \checkmark  &  &  \\
Usuba~\cite{MR3248209} & $\text{CC}^{**}$ &   & \checkmark &  \\
Torres Perez-Wu~\cite{MR3431031} & $\text{CC}^*$ &   & \checkmark &  \\
Doebler~\cite{MR3065118} & $\text{CC}^*$ & \checkmark & & \\
Shelah~\cite{MR1623206} & version in XII Theorem 2.5 &   & \checkmark &  \\
Sharpe-Welch~\cite{MR2817562} & SCC &   & \checkmark &  \\
Woodin~\cite{MR2723878}& SCC (Def 9.101 part 2) & \checkmark  & & \\
\hline
\end{tabular}
}
\end{table}


The following lemma is standard and streamlines arguments involving variants of SCC, by allowing one to replace ``every" by ``club-many", but \emph{without} having to strengthen the algebra in which the end extensions are required to be elementary.  
\begin{lemma}\label{lem_WoodinLemma}
  The following are equivalent:
\begin{enumerate}
 \item\label{item_SCC} $\text{SCC}^{\text{cof}}_{\text{gap}}$ (as in Definition \ref{def_CC_star});
 \item\label{item_ApparentlyWeaker} There are club-many $N \in [H_{\omega_3}]^\omega$ such that for cofinally many $\alpha \in \omega_2$, there exists an $N' \prec (H_{\omega_3},\in)$ where:
\begin{enumerate}
 \item $N \sqsubseteq N'$;
 \item $N' \cap [\alpha,\omega_2) \ne \emptyset$; and
 \item $N' \cap [ \text{sup}(N\cap \omega_2), \alpha) = \emptyset$.
\end{enumerate}
\end{enumerate}
\end{lemma}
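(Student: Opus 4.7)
The plan is to handle the easy direction (1) $\Rightarrow$ (2) in one stroke and then focus on (2) $\Rightarrow$ (1), which is the substantive content. For (1) $\Rightarrow$ (2), apply $\text{SCC}^{\text{cof}}_{\text{gap}}$ at $\theta = \omega_3$ with any wellorder $\Delta$ of $H_{\omega_3}$; the club $\{N \in [H_{\omega_3}]^\omega : N \prec (H_{\omega_3},\in,\Delta)\}$ witnesses (2), since every extension $N' \prec (H_{\omega_3},\in,\Delta)$ produced is automatically $\prec (H_{\omega_3},\in)$.

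For (2) $\Rightarrow$ (1), fix sufficiently large regular $\theta$ and a wellorder $\Delta$ of $H_\theta$. The first move is to use $\Delta$ to replace the existentially given club of (2) by a \emph{canonical} one: let $C$ be the $\Delta$-least club in $[H_{\omega_3}]^\omega$ witnessing (2). Then $C$ is definable in $(H_\theta,\in,\Delta)$ without parameters, hence $C \in N$ for every $N \prec (H_\theta,\in,\Delta)$. For any countable such $N$, a routine argument (enumerate $\bar N := N \cap H_{\omega_3}$ and inductively build a $\subseteq$-increasing sequence $\langle X_n : n < \omega\rangle \subseteq C \cap N$ covering this enumeration, noting that each countable $X_n \in N$ satisfies $X_n \subseteq N$) shows $\bar N = \bigcup_n X_n \in C$ by closure of clubs under countable unions.

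Now apply (2) to $\bar N$: for cofinally many $\alpha < \omega_2$ there is $\bar N' \prec (H_{\omega_3},\in)$ with $\bar N \sqsubseteq \bar N'$, $\bar N' \cap [\alpha,\omega_2) \ne \emptyset$, and $\bar N' \cap [\sup(\bar N\cap\omega_2), \alpha) = \emptyset$. For each such $\alpha$, set $N' := \mathrm{Sk}^{(H_\theta,\in,\Delta)}\bigl(N \cup (\bar N' \cap \omega_2)\bigr)$. Since $\bar N' \cap \omega_2 \subseteq \omega_2 \in N$, Fact~\ref{fact_Hulls} gives $N' = \{f(\bar y) : \bar y \in [\bar N' \cap \omega_2]^{<\omega},\ f \in N \cap {}^{[\omega_2]^{<\omega}}H_\theta\}$. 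The inclusion $N \subseteq N'$, elementarity $N' \prec (H_\theta,\in,\Delta)$, and nonemptiness of $N' \cap [\alpha,\omega_2) \supseteq \bar N' \cap [\alpha,\omega_2)$ are immediate; only the $\omega_1$-coincidence $N' \cap \omega_1 = N \cap \omega_1$ and the gap condition $N' \cap [\sup(N\cap\omega_2),\alpha) = \emptyset$ require work.

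Both conditions follow by the same device, which I expect to be the key step. Given $\beta = f(\bar y) \in N'$ with $\beta < \omega_1$ (respectively $\beta < \omega_2$), replace $f$ by its \emph{cap} $\tilde f : [\omega_2]^{<\omega} \to \omega_1 \cup \{*\}$ (respectively $\to \omega_2 \cup \{*\}$), defined to agree with $f$ wherever $f$ takes values below $\omega_1$ (respectively $\omega_2$) and to equal $*$ elsewhere. Then $\tilde f \in N$ (definable from $f$), and $\tilde f \in H_{\omega_3}$ because its domain has cardinality $\omega_2$ and its range lies in $H_{\omega_3}$. Hence $\tilde f \in N \cap H_{\omega_3} = \bar N \subseteq \bar N'$, and elementarity of $\bar N'$ in $(H_{\omega_3},\in)$ (together with closure of $\bar N'$ under finite tuples from $\bar N' \cap \omega_2$) puts $\beta = \tilde f(\bar y) \in \bar N'$. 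In the $\omega_1$ case this gives $\beta \in \bar N' \cap \omega_1 = \bar N \cap \omega_1 = N \cap \omega_1$, so $N \sqsubseteq N'$; in the $\omega_2$ case the gap condition for $\bar N'$, combined with $\sup(\bar N\cap\omega_2) = \sup(N\cap\omega_2)$, forbids $\beta \in [\sup(N\cap\omega_2),\alpha)$, yielding the gap condition for $N'$.
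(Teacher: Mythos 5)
Your proof is correct and follows essentially the same route as the paper's: project $N$ to $\bar N = N\cap H_{\omega_3}$, apply (2) there, take the Skolem hull of $N\cup(\bar N'\cap\omega_2)$ in $(H_\theta,\in,\Delta)$, and use the truncation trick (your ``cap'' $\tilde f$, the paper's ``without loss of generality $f:\omega_2\to\omega_2$'') to see that the hull contributes no ordinals below $\omega_2$ beyond those of $\bar N'$. The only cosmetic differences are that the paper realizes the club from (2) as the set of submodels of an algebra $\mathfrak{A}\in N$ rather than via your $\Delta$-least club $C$, and that in (1)$\Rightarrow$(2) one should, strictly speaking, obtain the statement at $H_{\omega_3}$ by projecting down from a sufficiently large $\theta$ rather than applying the definition directly at $\theta=\omega_3$.
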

Lemma \ref{lem_WoodinLemma} is similar to Lemma 9.103 of \cite{MR2723878}; however since there are some confusing typos in the ``3 implies 1" direction of the latter, we provide a short proof.\footnote{Lemma 9.103 of \cite{MR2723878} is the version for SCC;  Lemma \ref{lem_WoodinLemma} above is the version for $\text{SCC}^{\text{cof}}_{\text{gap}}$.}
\begin{proof}
That \ref{item_SCC} implies \ref{item_ApparentlyWeaker} is trivial.  For the other direction, fix a regular $\theta > |H_{\omega_3}|$ and a wellorder $\Delta$ on $H_\theta$.  Fix a countable $\widetilde{N} \prec (H_\theta,\in,\Delta)$.  The assumptions and the elementarity of $\widetilde{N}$ imply there is some algebra $\mathfrak{A}$ on $H_{\omega_3}$ such that $\mathfrak{A} \in \widetilde{N}$ and $\mathfrak{A}$ has the properties listed in \ref{item_ApparentlyWeaker}.  In particular since $\mathfrak{A} \in \widetilde{N}$ then $N:=\widetilde{N} \cap H_{\omega_3} \prec \mathfrak{A}$; so there are cofinally many $\alpha<\omega_2$ with the properties listed in \ref{item_ApparentlyWeaker}.  Fix such an $\alpha$ and an $N' \sqsupseteq N$ such that $N' \prec (H_{\omega_3},\in)$ and $N'$ has the other properties listed in \ref{item_ApparentlyWeaker}.  Define
\[
\widetilde{N}':= \{ f(y) \ | \  f \in \widetilde{N} \cap {}^{\omega_2} H_\theta  \text{ and } y \in N' \cap \omega_2 \} .
\]
Since $\widetilde{N} \prec (H_\theta,\in,\Delta)$ and $(H_\theta,\in,\Delta)$ has definable Skolem functions, then Fact \ref{fact_Hulls} implies that $\widetilde{N}' \prec (H_\theta,\in,\Delta)$.  Clearly $\widetilde{N} \subset \widetilde{N}'$. Furthermore if $f(y) < \omega_2$ where $f \in \widetilde{N}$ and $y \in N'$ then without loss of generality $f: \omega_2 \to \omega_2$; so $f \in \widetilde{N} \cap H_{\omega_3} = N \subset N'$.  Since $y$ and $f$ are both in $N'$ then $f(y) \in N'$.  This shows that $\widetilde{N}' \cap \omega_2 = N' \cap \omega_2$, and it follows that $\widetilde{N}'$ has the desired properties with respect to $\widetilde{N}$.
\end{proof}

The following lemma is very similar to Lemma \ref{lem_WoodinLemma}, so we omit the proof:
\begin{lemma}\label{lem_WoodinLemma_CC_club}
  The following are equivalent:
\begin{enumerate}
 \item\label{item_CC_club} Strong Chang's Conjecture  (Definition \ref{def_SCC})
 \item There are club-many $N \in [H_{\omega_3}]^\omega$ such that there exists an $N' \prec (H_{\omega_3},\in)$ where:
\begin{enumerate}
 \item $N \sqsubset N'$;
 \item $N' \cap [\text{sup}(N \cap \omega_2), \omega_2) \ne \emptyset$.
\end{enumerate}
\end{enumerate}

\end{lemma}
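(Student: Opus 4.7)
The plan is to adapt the proof of Lemma \ref{lem_WoodinLemma} essentially verbatim, substituting the single end-extension requirement of Definition \ref{def_SCC} for the cofinality-plus-gap requirement that appeared there. As in that lemma, the direction (\ref{item_CC_club}) $\Rightarrow$ (2) is immediate (just specialize to $\theta = \omega_3$), so all the work lies in the reverse direction.

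For (2) $\Rightarrow$ (\ref{item_CC_club}), I would fix a regular $\theta > |H_{\omega_3}|$, a wellorder $\Delta$ on $H_\theta$, and a countable $\widetilde{N} \prec (H_\theta,\in,\Delta)$, then use elementarity of $\widetilde{N}$ to pull into $\widetilde{N}$ an algebra $\mathfrak{A}$ on $H_{\omega_3}$ witnessing (2). Setting $N := \widetilde{N} \cap H_{\omega_3}$, one has $N \prec \mathfrak{A}$, so (2) supplies an $N' \prec (H_{\omega_3},\in)$ with $N \sqsubset N'$ and $N' \cap [\sup(N\cap\omega_2),\omega_2) \ne \emptyset$. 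The key construction is then to lift $N'$ to an SCC-witness for $\widetilde{N}$ by defining
\[
\widetilde{N}' := \{ f(y) \ | \  f \in \widetilde{N} \cap {}^{\omega_2} H_\theta \text{ and } y \in N' \cap \omega_2 \},
\]
and invoking Fact \ref{fact_Hulls} to conclude $\widetilde{N}' \prec (H_\theta,\in,\Delta)$; the inclusion $\widetilde{N} \subseteq \widetilde{N}'$ is immediate from constant functions.

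The three remaining requirements---proper $\subseteq$-extension, equality of $\omega_1$-intersections, and nonempty intersection with $[\sup(\widetilde{N}\cap\omega_2),\omega_2)$---all follow once one shows $\widetilde{N}' \cap \omega_2 = N' \cap \omega_2$, since then $\widetilde{N}' \cap \omega_1 = N' \cap \omega_1 = N \cap \omega_1 = \widetilde{N} \cap \omega_1$ and $\sup(\widetilde{N} \cap \omega_2) = \sup(N \cap \omega_2)$, with the needed proper extension and high witness inherited directly from $N \sqsubset N'$. For the identification $\widetilde{N}' \cap \omega_2 = N' \cap \omega_2$ I would use exactly the truncation argument from Lemma \ref{lem_WoodinLemma}: given $f(y) < \omega_2$ with $f \in \widetilde{N}$ and $y \in N' \cap \omega_2$, replace $f$ with the function agreeing with $f$ where $f$ lands below $\omega_2$ and $0$ elsewhere. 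This truncated map is first-order definable from $f$ inside $\widetilde{N}$, lives in $H_{\omega_3}$, hence in $\widetilde{N} \cap H_{\omega_3} = N \subseteq N'$; since $y \in N'$ as well, $f(y) \in N'$ by elementarity. The only ``obstacle'' is the trivial bookkeeping of these truncations---there is no new conceptual content beyond Lemma \ref{lem_WoodinLemma}, which is precisely why the authors felt justified in omitting the proof.
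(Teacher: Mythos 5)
Your proof is correct and is exactly the adaptation of Lemma \ref{lem_WoodinLemma} that the paper has in mind when it omits the argument: the forward direction is specialization, and the reverse direction uses the same hull $\widetilde{N}'$ built from functions in $\widetilde{N}$ applied to points of $N'\cap\omega_2$, with the same truncation trick to get $\widetilde{N}'\cap\omega_2=N'\cap\omega_2$. Nothing further is needed.
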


The following lemma basically says that if $M \sqsubseteq N$ and they have access to the same wellorder of $H_{\omega_2}$, then $N \cap \omega_2$ is an end extension of $M \cap \omega_2$.\footnote{Lemma \ref{lem_EndExtend} is the reason that our Definition \ref{def_SCC} of SCC is equivalent to part 2 of Definition 9.101 of \cite{MR2723878}.}   
\begin{lemma}\label{lem_EndExtend}
Suppose $w$ is a wellorder on $H_{\omega_2}$ and $M$ is a countable elementary substructure of $(H_{\omega_2},\in,w)$.  Suppose $N$ is another countable model, perhaps in some outer model of $V$, such that $M \sqsubseteq N$ and $N \cap H_{\omega_2}^V \prec (H_{\omega_2}^V,\in,w)$.  Then $N \cap \omega^V_2$ is an end-extension of $M \cap \omega^V_2$; i.e.\ \[ N \cap \text{sup}(M \cap \omega^V_2) = M \cap \omega^V_2. \]
\end{lemma}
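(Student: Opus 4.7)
The plan is to argue by contradiction, using the wellorder $w$ to build canonical surjections from $\omega_1$ onto ordinals below $\omega_2$ that live inside $M$, and then exploit $M \cap \omega_1 = N \cap \omega_1$ to pull members of $N \cap \omega_2^V$ back into $M$.

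First I would assume for contradiction that there is some $\beta \in N \cap \omega_2^V$ with $\beta < \sup(M \cap \omega_2^V)$ but $\beta \notin M \cap \omega_2^V$. Fix $\alpha \in M \cap \omega_2^V$ with $\beta < \alpha$. In the structure $(H_{\omega_2}^V, \in, w)$ the map $\alpha \mapsto f_\alpha$, where $f_\alpha$ is the $w$-least surjection from $\omega_1$ onto $\alpha$, is definable. Since $\alpha \in M \prec (H_{\omega_2}^V, \in, w)$, elementarity gives $f_\alpha \in M$, and $f_\alpha$ genuinely is a surjection of $\omega_1$ onto $\alpha$ in $V$.

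Next I would transfer $f_\alpha$ into $N$. Because $M \subseteq N$ we have $f_\alpha \in N \cap H_{\omega_2}^V$, and by hypothesis this model is elementary in $(H_{\omega_2}^V, \in, w)$. The statement ``$\exists \gamma < \omega_1 \ f_\alpha(\gamma) = \beta$'' is true in $H_{\omega_2}^V$ with parameters $\beta, f_\alpha \in N \cap H_{\omega_2}^V$, so there is $\gamma \in N \cap \omega_1$ with $f_\alpha(\gamma) = \beta$.

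Finally I would close the loop using $M \sqsubseteq N$: by definition this means $M \cap \omega_1 = N \cap \omega_1$, so $\gamma \in M$. Since also $f_\alpha \in M$, elementarity of $M$ yields $f_\alpha(\gamma) = \beta \in M$, contradicting the choice of $\beta$. I do not foresee any substantive obstacle; the only thing to be careful about is that the canonical surjection $f_\alpha$ is definable using $w$ as a predicate (so that $f_\alpha \in M$), and that the existential witness $\gamma$ is pulled into $N$ via elementarity of $N \cap H_{\omega_2}^V$ rather than of $N$ itself, since $N$ may live in an outer model.
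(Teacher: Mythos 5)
Your proof is correct and follows essentially the same route as the paper's: both take the $w$-least map from $\omega_1$ onto an ordinal of $M$ above the offending point, note it lies in both $M$ and $N\cap H_{\omega_2}^V$, and use $M\cap\omega_1=N\cap\omega_1$ to pull the preimage (and hence the point itself) into $M$. The only cosmetic differences are that the paper argues directly with the image $f[N\cap\omega_1]=f[M\cap\omega_1]$ rather than by contradiction, and uses a bijection where you use a surjection.
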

\begin{proof}
One direction is trivial, since $M \subset N$ by assumption.  For the other direction, let $\zeta \in N \cap \text{sup}(M \cap \omega^V_2)$.  Since $\text{sup}(M \cap \omega^V_2)$ is a limit ordinal there is some $\beta \in M \cap \omega_2^V$ such that $\zeta < \beta$.  Let $f$ be the $w$-least bijection from $\omega_1 \to \beta$.  Since $\beta \in M \subset N$, $M \prec (H_{\omega_2}^V,\in,w)$, and $N \cap H_{\omega_2}^V \prec (H_{\omega_2}^V,\in,w)$, then $f \in M \cap N$.  Then \[ \zeta \in N \cap \beta = f[N \cap \omega_1] = f[M \cap \omega_1] = M \cap \beta. \]
\end{proof}

Finally we recall a standard fact:
\begin{fact}\label{fact_includeMasterCondition}
If $\mathbb{P}$ is a proper poset, $S$ is a stationary subset of $[H_\theta]^\omega$ for some $\theta \ge 2^{|\mathbb{P}|}$, and $G$ is generic for $\mathbb{P}$, then $V[G]$ believes that there are stationarily many $N \in S$ such that $G$ includes a master condition for $N$.
\end{fact}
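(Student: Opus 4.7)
The plan is a standard master-condition argument, using the characterization of stationary sets of $[H_\theta]^\omega$ via closure under functions. Fix $p \in \mathbb{P}$ and a $\mathbb{P}$-name $\dot F$ for an arbitrary function $H_\theta^{<\omega} \to H_\theta$; it then suffices to find an extension of $p$ forcing that some $N \in S$ closed under $\dot F$ has $\dot G$ containing an $(N, \mathbb{P})$-generic condition. Pick a regular $\lambda$ large with $\mathbb{P}, S, H_\theta, \dot F \in H_\lambda$. Since the lift $\{X \in [H_\lambda]^\omega : X \cap H_\theta \in S\}$ is stationary in $[H_\lambda]^\omega$, there is a countable $M \prec (H_\lambda, \in)$ with $\{p, \dot F, S, \mathbb{P}, \theta\} \subseteq M$ and $N := M \cap H_\theta \in S$. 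By properness, fix $q \le p$ that is $(M, \mathbb{P})$-generic.

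The next step is to argue that $q$ is automatically a master condition for $N$ itself. Here the hypothesis $\theta \ge 2^{|\mathbb{P}|}$ is used: it puts $\mathbb{P}$ and all its maximal antichains inside $H_\theta$. So for any maximal antichain $A \in M$ of $\mathbb{P}$, we have $A \in M \cap H_\theta = N$ and $A \cap M = A \cap N$ (since $A \subseteq \mathbb{P} \subseteq H_\theta$). Therefore the predensity of $A \cap M$ below $q$ coincides with the predensity of $A \cap N$ below $q$, and $q$ is $(N, \mathbb{P})$-generic.

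Finally, one verifies that $q$ forces $N$ to be closed under $\dot F$. Let $G \ni q$ be generic. The standard consequence of $(M, \mathbb{P})$-genericity is that $M[G] \cap V = M$: any $\tau \in M$ with $\tau^G \in V$ is decided by some maximal antichain in $M$, which intersects $G$ in $M$, so $\tau^G$ is identifiable inside $M$. Applying this, for $\vec x \in N^{<\omega} \subseteq M^{<\omega}$, the value $\dot F(\vec x)^G$ is in $M[G]$ and also in $H_\theta^V$ (since $\dot F$ names a function into $H_\theta^V$), hence in $M[G] \cap H_\theta^V = M \cap H_\theta = N$. Thus $N$ is closed under $\dot F^G$, is in $S$, and has the master condition $q \in G$.

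The sole technical point, and the only place the size hypothesis $\theta \ge 2^{|\mathbb{P}|}$ is essentially used, is the transfer of the master condition from the auxiliary large hull $M \prec H_\lambda$ down to the ``correctly sized'' $N = M \cap H_\theta \in S$; everything else is routine.
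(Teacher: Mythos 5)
Your proof is correct and follows essentially the same route as the paper's: both lift $S$ to a hull $M$ (resp.\ $\widetilde N$) of a larger $H_\lambda$, apply properness there, and transfer the master condition down to $N = M \cap H_\theta \in S$; you merely phrase it as a direct density argument rather than by contradiction, and you spell out the transfer and closure steps that the paper leaves implicit.
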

\begin{proof}
If not then there is some condition $p$ and some name $\dot{\mathfrak{A}}$ for an algebra on $H_\theta^V$ such that
\begin{equation}\label{eq_ForcesNoMC}
p \Vdash \forall N \in \check{S} \ \ N \prec \dot{\mathfrak{A}} \implies \dot{G} \text{ does not include a master condition for } N.
\end{equation}
The stationarity of $S$ ensures that there is some $N \in S$ such that $p \in N$ and $N = \widetilde{N} \cap H_\theta$ for some 
\[
\widetilde{N} \prec (H_{|H_\theta|^+},\in, \dot{\mathfrak{A}}).
\]
Since $\mathbb{P}$ is proper and $p \in \widetilde{N}$ then there is a $p' \le p$ which is a master condition for $\widetilde{N}$.  Since $\dot{\mathfrak{A}} \in \widetilde{N}$ and $p'$ is a master condition for $\widetilde{N}$ then
\begin{equation*}
p' \Vdash \widetilde{N} \cap H_\theta^V = N \prec \dot{\mathfrak{A}}, \ N \in S, \ \text{ and } \dot{G} \text{ includes a master condition for } N .
\end{equation*}

Since $p' \le p$, this contradicts \eqref{eq_ForcesNoMC}.
\end{proof}

\section{Martin's Maximum, $\dagger$, and $\text{SCC}^{\text{cof}}_{\text{gap}}$}\label{sec_MM}

Recall from the introduction that Martin's Maximum (MM) implies $\dagger$, which in turn implies $\text{SCC}^{\text{cof}}$.  In this section we show that MM implies the principle $\text{SCC}^{\text{cof}}_{\text{gap}}$ introduced in Section \ref{sec_Prelims}, while $\dagger$ does not.

If $\Gamma$ is a subclass of $\{ W \ : \ |W|=\omega_1 \subset W \}$, $\text{RP}_\Gamma$ abbreviates the statement:  For every regular $\theta \ge \omega_2$ and every stationary $S \subseteq [H_\theta]^\omega$, there exists a $W \in \Gamma \cap [H_\theta]^{\omega_1}$ such that $S \cap [W]^\omega$ is stationary.  The following is a standard fact:
\begin{fact}\label{fact_StatManyReflect}
If $\text{RP}_\Gamma$ holds, then for every $\theta \ge \omega_2$ and every stationary $S \subseteq [H_\theta]^\omega$ there are in fact stationarily many $W \in [H_\theta]^{\omega_1} \cap  \Gamma$ such that $S \cap [W]^\omega$ is stationary.
\end{fact}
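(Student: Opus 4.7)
The plan is the standard upgrade from ``there exists'' to ``stationarily many.''  Fix a stationary $S \subseteq [H_\theta]^\omega$.  To show the indicated set is stationary in $[H_\theta]^{\omega_1}$, take an arbitrary algebra $\mathfrak{A}$ on $H_\theta$ in a countable language; it suffices to produce some $W \in [H_\theta]^{\omega_1} \cap \Gamma$ which is closed under $\mathfrak{A}$ and satisfies $S \cap [W]^\omega$ stationary.  The trick is to \emph{shrink} $S$ before invoking $\text{RP}_\Gamma$ so that the reflecting model is automatically $\mathfrak{A}$-closed.

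Concretely, let $S^* := \{ M \in S \ : \ M \prec \mathfrak{A}\}$.  Since $\{M \in [H_\theta]^\omega : M \prec \mathfrak{A}\}$ is club, $S^*$ is stationary in $[H_\theta]^\omega$.  Apply $\text{RP}_\Gamma$ to $S^*$ to obtain $W \in \Gamma \cap [H_\theta]^{\omega_1}$ such that $S^* \cap [W]^\omega$ is stationary in $[W]^\omega$.  Since $S^* \subseteq S$, we automatically get that $S \cap [W]^\omega$ is stationary in $[W]^\omega$; what remains is to verify that $W \prec \mathfrak{A}$, so that $W$ lies in the prescribed club associated to $\mathfrak{A}$.

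To check $W \prec \mathfrak{A}$, enumerate $W = \{w_\alpha : \alpha < \omega_1\}$ and inductively construct a continuous $\subseteq$-increasing chain $\langle M_\alpha : \alpha < \omega_1 \rangle$ of countable subsets of $W$ with $w_\alpha \in M_{\alpha+1}$ and $M_{\alpha+1} \in S^*$ (hence $M_{\alpha+1} \prec \mathfrak{A}$).  A successor step is available because $\{ N \in [W]^\omega : N \supseteq M_\alpha \cup \{w_\alpha\}\}$ is a club in $[W]^\omega$ (using $|W| = \omega_1$), so meets $S^*$ by the stationarity obtained from reflection.  At limits $\alpha$, the union $M_\alpha := \bigcup_{\beta < \alpha} M_\beta$ is countable and lies in $W$, and is $\mathfrak{A}$-elementary as a union of an $\subseteq$-increasing chain of $\mathfrak{A}$-elementary submodels.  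Hence $W = \bigcup_{\alpha < \omega_1} M_\alpha \prec \mathfrak{A}$, completing the proof.

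The only nonroutine moment is the chain construction verifying $W \prec \mathfrak{A}$; everything else is bookkeeping, and no serious obstacle is anticipated.
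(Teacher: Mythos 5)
Your proof is correct and follows essentially the same route as the paper: shrink $S$ to the subfamily of models closed under the given algebra, apply $\text{RP}_\Gamma$ to the shrunken set, and then deduce that the reflecting $W$ is itself closed. The only (cosmetic) difference is in that last verification: the paper checks closure under a single function $F$ directly, finding for each finite $p \in [W]^{<\omega}$ some $M \in S' \cap [W]^\omega$ with $p \in M$, whereas you build an $\omega_1$-chain of elementary submodels from $S^*$ exhausting $W$ --- both rest on the same stationarity of the shrunken set in $[W]^\omega$.
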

\begin{proof}
If not, there is a function $F: [H_\theta]^{<\omega} \to H_\theta$ and a stationary $S \subseteq [H_\theta]^\omega$ such that $S \cap [W]^\omega$ is nonstationary for every $W \in \Gamma$ that is closed under $F$.  Let $S':= \{ M \in S \ : \ M \text{ is closed under } F \}$.  Then $S'$ is stationary, so by assumption there is a $W \in \Gamma$ such that $S' \cap [W]^\omega$ is stationary (and hence $S \cap [W]^\omega$ is also stationary).  If $p \in [H_\theta]^{<\omega} \cap W$ there is some $M \in S'$ such that $p \in M$, and since $M \in S'$ we have $F(p) \in M \subset W$.  So $W$ is closed under $F$, a contradiction.  
\end{proof}

Two particular subclasses of $\{ W \ : \ |W|=\omega_1 \subset W \}$ are relevent in what follows.  $\text{IA}$ denotes the class of $W$ such that $\omega_1 \subset W$ and there is some $\subseteq$-increasing, continuous sequence $\langle N_\xi \ : \ \xi < \omega_1 \rangle$ of countable sets such that $W= \bigcup_{\xi < \omega_1} N_\xi$ and $\vec{N} \restriction \xi \in W$ for every $\xi < \omega_1$ (the IA stands for ``internally approachable").  $\text{IC}$ denotes the class of $W$ such that $|W|=\omega_1 \subset W$ and $W \cap [W]^\omega$ contains a club in $[W]^\omega$ (the IC stands for ``internally club", as introduced in Foreman-Todorcevic~\cite{MR2115072}).

\begin{lemma}\label{lem_RPIC_SCCcofgap}
$\text{RP}_{\text{IC}}$ implies $\text{SCC}^{\text{cof}}_{\text{gap}}$.
\end{lemma}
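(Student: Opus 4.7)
I would prove this by a reflection argument using $\text{RP}_{\text{IC}}$. By Lemma~\ref{lem_WoodinLemma}, it suffices to produce, for club-many countable $N \prec H_{\omega_3}$ and every $\alpha_0 < \omega_2$, some $\alpha > \alpha_0$ and $N' \prec H_{\omega_3}$ with $N \sqsubseteq N'$, $N' \cap [\sup(N \cap \omega_2), \alpha) = \emptyset$, and $N' \cap [\alpha, \omega_2) \neq \emptyset$. Fix such an $N$ and an $\alpha_0 > \sup(N \cap \omega_2)$.

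\textbf{Reflection.} Apply $\text{RP}_{\text{IC}}$ to the stationary set $S := \{M \in [H_{\omega_3}]^\omega : M \prec H_{\omega_3} \text{ and } N \sqsubseteq M\}$. Using Fact~\ref{fact_StatManyReflect} to close under sufficiently many Skolem functions, I obtain $W \prec H_{\omega_3}$ with $|W| = \omega_1 \subset W$, $W \in \text{IC}$, $N \cup \{N,\alpha_0\} \subseteq W$, and $S \cap [W]^\omega$ stationary in $[W]^\omega$. Set $\gamma := \sup(W \cap \omega_2)$; since $\alpha_0 \in W$ and $\omega_1 \subset W$, we have $\alpha_0 < \gamma < \omega_2$ with $\text{cof}(\gamma) = \omega_1$.

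\textbf{The candidate $N'$.} Take $\alpha := \gamma$ and $N' := \mathrm{Sk}^{H_{\omega_3}}(N \cup \{\gamma\})$, using the definable Skolem functions of a fixed wellorder $\Delta$ on $H_{\omega_3}$. By Fact~\ref{fact_Hulls}, $N'$ is countable and $N' \cap \omega_2 = (N \cap \omega_2) \cup \{f(\gamma) : f \in N \cap {}^{\omega_2}\omega_2\}$; the clause $N' \cap [\gamma, \omega_2) \neq \emptyset$ is immediate since $\gamma \in N'$. The remaining two clauses reduce to showing that for every $f \in N$ with $f : \omega_2 \to \omega_2$, the value $f(\gamma)$ lies in $N \cup [\gamma, \omega_2)$ (giving the gap), and $f(\gamma) \in N \cap \omega_1$ for every $f : \omega_2 \to \omega_1$ in $N$ (giving $N \sqsubseteq N'$).

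\textbf{Main obstacle.} The crux is this control on Skolem values at $\gamma$, and this is where the IC hypothesis on $W$ does real work rather than any weaker reflection. Since $W \in \text{IC}$, I extract from $[W]^\omega \cap W$ and the stationarity of $S \cap [W]^\omega$ a continuous $\subseteq$-chain $\langle M_\xi : \xi < \omega_1 \rangle$ of countable elementary substructures of $H_{\omega_3}$ with $N \sqsubseteq M_\xi \subseteq W$, $M_\xi \in W$, and $\bigcup_{\xi < \omega_1} M_\xi = W$; the sups $\sup(M_\xi \cap \omega_2)$ then form a club in $\gamma$ of cofinality $\omega_1$. Elementarity of $W$ forces $f[W \cap \omega_2] \subseteq W \cap \omega_2 \subset \gamma$, so every $f(\beta)$ with $\beta \in W \cap \omega_2$ already sits below $\gamma$ and inside $W$. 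A Fodor-style pressing-down along this chain---combined with elementarity of the $M_\xi$'s, which absorb $f$ and hence any value of $f$ appearing on a stationary set of inputs---should force $f(\gamma)$ to be reflected into $N$ whenever $f(\gamma) < \gamma$, and otherwise $f(\gamma) \geq \gamma$. Making this last implication rigorous is the most delicate step; if the direct choice $\{\gamma\}$ falls short for some pathological $f$, the fallback is to replace $\gamma$ by an appropriately generic $\beta \in [\gamma, \omega_2)$ chosen above the (bounded) ``bad'' pullbacks $f^{-1}[[\sup(N \cap \omega_2), \gamma)]$ for the countably many $f \in N$.
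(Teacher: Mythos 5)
Your strategy---fix $N$ first, then reflect to obtain $W$ and take $N':=\mathrm{Sk}^{\mathfrak{A}}(N\cup\{\gamma\})$ with $\gamma=\sup(W\cap\omega_2)$---has a genuine gap at exactly the point you flag as delicate, and it is not repairable with that order of quantifiers. For a \emph{fixed} countable $N$, the set of $\gamma$ with $\mathrm{Sk}^{\mathfrak{A}}(N\cup\{\gamma\})\cap\gamma\subseteq N$ is essentially the set $A_N$ from the proof of Theorem \ref{thm_GeneralizeSakai}, and nothing in ZFC, nor the mere fact that $N\in W$ for some $W\in\text{IC}$ reflecting some stationary set, guarantees that $\sup(W\cap\omega_2)\in A_N$. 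Your Fodor-style step does not yield ``$f(\gamma)<\gamma\Rightarrow f(\gamma)\in N$'': pressing down can only adjust the \emph{countable model}, not the value $f(\gamma)$, which for a fixed $N$ can perfectly well land in $W\cap\gamma$ outside $N$ (and the same problem blocks $N'\cap\omega_1=N\cap\omega_1$ for $f:\omega_2\to\omega_1$ in $N$). The fallback of moving to a $\beta\ge\gamma$ above the preimages $f^{-1}\big[[\sup(N\cap\omega_2),\gamma)\big]$ is incoherent: these preimages need not be bounded, and the interval to be avoided, $[\sup(N\cap\omega_2),\beta)$, grows with $\beta$. There is also a concrete error at the outset: the set $S=\{M : M\prec H_{\omega_3}\text{ and }N\sqsubseteq M\}$ is \emph{nonstationary} (club-many $M$ contain the ordinal $\delta:=N\cap\omega_1$ as an element and hence satisfy $M\cap\omega_1>\delta$), so $\text{RP}_{\text{IC}}$ cannot be applied to it.

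The paper's proof reverses the order of choices, arguing by contradiction. If $\text{SCC}^{\text{cof}}_{\text{gap}}$ fails, Lemma \ref{lem_WoodinLemma} gives a stationary set $S$ of counterexample models $M$, each with a bound $\beta_M<\omega_2$ beyond which no gap-extension of $M$ exists. One reflects $S$ to a $W\in\text{IC}$ that is elementary in a structure carrying the predicate $M\mapsto\beta_M$. The IC hypothesis is used to get stationarily many $M\in S\cap[W]^\omega$ that are \emph{elements} of $W$ (so $\beta_M\in W$, i.e.\ $\beta_M<W\cap\omega_2$), and then normality of the nonstationary ideal on $[W]^\omega$ is used to choose, \emph{after} $W$ is fixed, one such $M$ with $\mathrm{Sk}^{(H_{\omega_3},\in,\Delta)}(M\cup\{W\cap\omega_2\})\cap W=M$; this single equality simultaneously gives $M\sqsubset N$, the gap below $W\cap\omega_2$, and the hit at $W\cap\omega_2$, contradicting $\beta_M<W\cap\omega_2$. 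That diagonal choice of the countable model as a function of $W$ is the ingredient missing from your write-up.
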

\begin{proof}
Suppose not.  By Lemma \ref{lem_WoodinLemma} there is a stationary $S \subseteq [H_{\omega_3}]^\omega$ such that for every $M \in S$, there is a $\beta_M < \omega_2$ such that for all $\beta \in [\beta_M,\omega_2)$, there is \textbf{no} countable $N \prec (H_{\omega_3},\in)$ such that $M \sqsubset N$, $N \cap \big( \text{sup}(M \cap \omega_2), \beta \big) = \emptyset$, and $N \cap [\beta,\omega_2) \ne \emptyset$.  By Fact \ref{fact_StatManyReflect} there exists a $W \in \text{IC} \cap [H_{\omega_3}]^{\omega_1}$ such that $S \cap [W]^\omega$ is stationary, and $W \prec (H_{\omega_3},\in,S, P, \Delta)$ where $\Delta$ is a wellordering of $H_{\omega_3}$ and $P$ is the predicate $\{ (M,\beta_M) \ : \  M \in S \}$.  Since $W \in \text{IC}$, $S \cap W \cap [W]^\omega$ is stationary.  It follows by normality and $\sigma$-completeness of the nonstationary ideal that there is some $M \in S \cap W \cap [W]^\omega$ such that 
\[
N \cap W = M \text{, where } N:=\text{Sk}^{(H_{\omega_3},\in,\Delta)}\big( M \cup \{ W \cap \omega_2 \} \big).
\]
Then $M \sqsubset N$, $N \cap \big( \text{sup}(M \cap \omega_2), W \cap \omega_2 \big) = \emptyset$, and $W \cap \omega_2 \in N$; it follows that $\beta_M \ge W \cap \omega_2$.  On the other hand, since $M \in W$ and $W$ is elementary with respect to the predicate $P$, $\beta_M < W \cap \omega_2$.  Contradiction.
\end{proof}

\begin{corollary}
Martin's Maximum implies $\text{SCC}^{\text{cof}}_{\text{gap}}$.
\end{corollary}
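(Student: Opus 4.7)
The plan is to reduce the corollary to Lemma \ref{lem_RPIC_SCCcofgap} by establishing that Martin's Maximum implies $\text{RP}_{\text{IC}}$. The key ingredient, already proved by Foreman-Magidor-Shelah~\cite{MR924672} in the course of their original consistency proof of MM, is the strong form of generalized stationary reflection: under MM, for every regular $\theta \ge \omega_2$ and every stationary $S \subseteq [H_\theta]^\omega$, one can find a $W \in [H_\theta]^{\omega_1}$ with $\omega_1 \subseteq W$ that is internally approachable (i.e.\ $W \in \text{IA}$) and for which $S \cap [W]^\omega$ is stationary in $[W]^\omega$. In other words, MM implies $\text{RP}_{\text{IA}}$.

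Next I would observe that $\text{IA} \subseteq \text{IC}$: if $W = \bigcup_{\xi < \omega_1} N_\xi$ is the union of a continuous $\subseteq$-increasing sequence $\vec{N}$ of countable sets with every initial segment $\vec{N} \restriction \xi$ lying in $W$, then in particular each $N_\xi$ is in $W$, and $\{ N_\xi : \xi < \omega_1 \}$ is a club in $[W]^\omega$ contained in $W$, witnessing $W \in \text{IC}$. Hence $\text{RP}_{\text{IA}}$ implies $\text{RP}_{\text{IC}}$. Combining these two steps, MM implies $\text{RP}_{\text{IC}}$, and Lemma \ref{lem_RPIC_SCCcofgap} then delivers $\text{SCC}^{\text{cof}}_{\text{gap}}$.

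There is no real obstacle here; all of the work already lives inside Lemma \ref{lem_RPIC_SCCcofgap} and inside the classical reflection theorem of Foreman-Magidor-Shelah. The only mild point to verify is that the Foreman-Magidor-Shelah argument produces a reflecting $W$ that is internally approachable (not just one containing $\omega_1$); this is built into their consistency proof and is the standard form in which the stationary-reflection consequence of MM is usually stated.
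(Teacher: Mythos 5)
Your proposal is correct and matches the paper's proof exactly: MM implies $\text{RP}_{\text{IA}}$ (by Foreman--Magidor--Shelah), $\text{IA} \subseteq \text{IC}$ gives $\text{RP}_{\text{IC}}$, and Lemma \ref{lem_RPIC_SCCcofgap} finishes the argument. Your extra verification that internally approachable structures are internally club is a nice touch but the route is the same.
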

\begin{proof}
MM implies $\text{RP}_{\text{IA}}$ (see \cite{MR1903851} and \cite{MR924672}). Clearly $\text{IA} \subseteq \text{IC}$, and so $\text{RP}_{\text{IA}} \implies \ \text{RP}_{\text{IC}}$.  The corollary then follows from Lemma \ref{lem_RPIC_SCCcofgap}.
\end{proof}

Since $\dagger \implies \text{SCC}^{\text{cof}}$ by Shelah's Theorem \ref{thm_ShelahThm}, it is natural to ask if $\dagger$ also implies $\text{SCC}^{\text{cof}}_{\text{gap}}$.  It does not.  To see this we use a result of Usuba~\cite{MR3523539}.  For $m < n$ let $S^n_m$ denote the set $\omega_n \cap \text{cof}(\omega_m)$.  A sequence $\vec{d} = \langle d_\alpha \ : \ \alpha \in S^2_0 \rangle$ is called a \textbf{nonreflecting ladder system for $\boldsymbol{S^2_0}$} iff each $d_\alpha$ is a cofinal subset of $\alpha$ of ordertype $\omega$, and for every $\gamma \in S^2_1$ there exists a club $D \subseteq \gamma$ and an injective function $f: D \to \text{ORD}$ such that $f(\alpha) \in d_\alpha$ for all $\alpha \in D$.  

\begin{lemma}\label{lem_NRladder}
Suppose there is a nonreflecting ladder system for $S^2_0$.  Then $\text{SCC}^{\text{cof}}_{\text{gap}}$ fails.
\end{lemma}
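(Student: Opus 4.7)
The plan is to derive a contradiction from the conjunction of a nonreflecting ladder $\vec d$ and $\text{SCC}^{\text{cof}}_{\text{gap}}$. First I would invoke Lemma~\ref{lem_WoodinLemma} to fix a countable $N\prec(H_{\omega_3},\in,\Delta,w,\vec d,F)$ enjoying the gap property, where $w$ wellorders $H_{\omega_2}$ and $F:\gamma\mapsto(D_\gamma,f_\gamma)$ assigns the $\Delta$-least nonreflection witness at each $\gamma\in S^2_1$. Set $\delta:=\sup(N\cap\omega_2)\in S^2_0$. By the gap property, for cofinally many $\alpha$ there is a witness $N'_\alpha\sqsupseteq N$; replacing $\alpha$ by $\min(N'_\alpha\cap(\delta,\omega_2))$ if necessary, I may assume $\alpha\in N'_\alpha$ (the \emph{tight} case). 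Then Lemma~\ref{lem_EndExtend}, applied through $w$, yields $N'_\alpha\cap\delta=N\cap\omega_2$, whence $N'_\alpha\cap\alpha\subseteq\delta$.

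Next I would split on the cofinality of $\alpha$. If $\alpha\in S^2_0$, then $\vec d,\alpha\in N'_\alpha$ give $d_\alpha\in N'_\alpha$, and since $d_\alpha$ is countable with a canonical enumeration inside $N'_\alpha$ we have $d_\alpha\subseteq N'_\alpha\cap\alpha\subseteq\delta$; this contradicts the cofinality of $d_\alpha$ in $\alpha>\delta$. Hence cofinally many tight good $\alpha$ must lie in $S^2_1$. For such $\alpha$, elementarity puts $(D_\alpha,f_\alpha)\in N'_\alpha$; since $D_\alpha$ is club in $\alpha$ and $\sup(N'_\alpha\cap\alpha)=\delta$, closedness of $D_\alpha$ forces $\delta\in D_\alpha$, so $f_\alpha(\delta)\in d_\delta$. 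The injectivity of $f_\alpha$ together with $f_\alpha\in N'_\alpha$ rules out $f_\alpha(\delta)\in N'_\alpha$ (else $\delta=f_\alpha^{-1}(f_\alpha(\delta))\in N'_\alpha$, contradicting $\delta\notin N'_\alpha$), so $f_\alpha(\delta)\in d_\delta\setminus N$, a countable set.

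To close the argument I would invoke pigeonhole: since $d_\delta\setminus N$ is countable while the good tight $\alpha\in S^2_1$ are cofinal in $\omega_2$, some $v\in d_\delta\setminus N$ is attained by $f_\alpha(\delta)$ for a cofinal set $V_v\subseteq S^2_1$. Picking an $\omega_1$-length strictly increasing $\langle\alpha_i:i<\omega_1\rangle\subseteq V_v$ with $\gamma^*:=\sup_i\alpha_i\in S^2_1$ and applying nonreflection at $\gamma^*$ to obtain $(D_{\gamma^*},f_{\gamma^*})$, one analyzes the $\omega$-cofinal limits $\beta_j:=\sup_{i<j}\alpha_i\in D_{\gamma^*}\cap S^2_0$. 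The main obstacle is extracting a contradiction from this configuration: the uniform behaviour ``$\delta\in D_{\alpha_i}$ with $f_{\alpha_i}(\delta)=v$'' along the $\alpha_i$'s, combined with the injectivity of $f_{\gamma^*}$ on $D_{\gamma^*}\cap S^2_0$, should, via a Fodor-style pressing-down argument on a stationary subset of the $\beta_j$'s, force an injection of an uncountable set into some countable $d_\beta$, contradicting $|d_\beta|=\omega$. This final reflection-style step, routing through the constraints imposed by each individual witness $(D_{\alpha_i},f_{\alpha_i})$, is the combinatorial heart of the lemma.
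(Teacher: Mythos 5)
Your first two paragraphs are essentially correct and track the paper's argument: ruling out $\alpha\in S^2_0$ via the countability of $d_\alpha$, placing $\delta:=\sup(N\cap\omega_2)$ in $D_\alpha$ as a limit point of the club, and concluding $f_\alpha(\delta)\in d_\delta\setminus N'_\alpha$ from injectivity. But the third paragraph is a genuine gap, and the route you sketch cannot be completed. The witnesses $(D_\alpha,f_\alpha)$ for distinct $\alpha\in S^2_1$ are entirely unrelated: the definition of a nonreflecting ladder system only asserts, for each $\gamma\in S^2_1$ separately, the \emph{existence} of some club and some injection, with no coherence across different $\gamma$'s. So the configuration you reach --- cofinally many $\alpha$ with $\delta\in D_\alpha$ and $f_\alpha(\delta)=v$ --- is simply not contradictory. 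Invoking nonreflection once more at $\gamma^*$ produces yet another unrelated pair $(D_{\gamma^*},f_{\gamma^*})$, and the injection $f_{\gamma^*}$ lands in the uncountable union $\bigcup_j d_{\beta_j}$ of countable sets, which is no obstruction; the constraint $f_{\alpha_i}(\delta)=v$ never interacts with $f_{\gamma^*}$. There is no Fodor-style argument to be had here.

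The missing idea is the choice of $N$. The paper does not work with an arbitrary $N$; it works with the stationary set $S$ of countable $M\prec(H_{\omega_3},\in,\Delta)$ such that $d_{\sup(M\cap\omega_2)}\subseteq M$ (such $M$ are easily built as unions of $\omega$-chains that successively absorb the ladder at the current supremum). For $N=M\in S$, your own second paragraph finishes the proof in one line: $f_\alpha(\delta)\in d_\delta\subseteq M\subseteq N'_\alpha$, so injectivity of $f_\alpha\in N'_\alpha$ forces $\delta\in N'_\alpha$, contradicting the gap requirement $N'_\alpha\cap[\delta,\alpha)=\emptyset$. Since $S$ is stationary and, by Lemma~\ref{lem_WoodinLemma}, $\text{SCC}^{\text{cof}}_{\text{gap}}$ demands that club-many $N$ admit gap-extensions, this refutes the principle; no pigeonhole or second reflection is needed. (Indeed, as the paper notes, the ``cofinal'' clause is never used --- for $M\in S$ even a single gap-extension past $\delta$ is impossible.)
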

\begin{proof}
Let $\Delta$ be a wellordering of $H_{\omega_3}$, and let $\vec{d}$ be the $\Delta$-least nonreflecting ladder system for $S^2_0$.  Let $S$ be the set of $M \in [H_{\omega_3}]^\omega$ such that $M \prec \mathfrak{A}:=(H_{\omega_3},\in,\Delta)$ and $M \supset d_{\text{sup}(M \cap \omega_2)}$; $S$ is easily seen to be stationary.\footnote{In fact it is stationary and costationary, as shown in Usuba~\cite{MR3523539}.}.  Fix $M \in S$.  We prove that if $N$ is any countable elementary substructure of $\mathfrak{A}$ such that $M \subseteq N$ and 
\[N \cap \big[ \text{sup}(M \cap \omega_2), \omega_2 \big) \ne \emptyset,
\]
then $\text{sup}(M \cap \omega_2) \in N$; since $S$ is stationary this will imply that $\text{SCC}^{\text{cof}}_{\text{gap}}$ fails.\footnote{The ``cofinal" requirement of $\text{SCC}^{\text{cof}}_{\text{gap}}$ isn't used here, just the ``gap" requirement.  That is, the proof actually shows that if there is a nonreflecting ladder system for $S^2_0$, then there are stationarily many models $M$ for which there is no $\beta \in \big( \text{sup}(M \cap \omega_2),\omega_2  \big)$ such that $\text{Sk}^{\mathfrak{A}}(M \cup \{ \beta \}) \cap \beta = M \cap \omega_2$.}  So fix such an $N$, and let $\gamma$ be the least member of $N \cap \big[ \text{sup}(M \cap \omega_2), \omega_2 \big)$.  Suppose toward a contradiction that $\gamma > \text{sup}(M \cap \omega_2)$; then $\gamma$ must have cofinality $\omega_1$.  Since $\vec{d}$ is nonreflecting and $N \prec \mathfrak{A}$, in $N$ there is a club $D \subset \gamma$ and an injective $f: D \to \text{ORD}$ such that $f(\alpha) \in d_\alpha$ for every $\alpha \in D$.  By minimality of $\gamma$ and the facts that $D \in N$ and $D$ is unbounded in $\gamma$, $\text{sup}(M \cap \omega_2)$ is a limit point of $D$, and hence an element of $D$ because $D$ is closed.  Now $f \big(\text{sup}(M \cap \omega_2) \big) \in d_{\text{sup}(M \cap \omega_2)} \subset M$ because $M \in S$, and hence $f \big(\text{sup}(M \cap \omega_2) \big) \in N$.  But then the injectivity of $f$, and the fact that $f \in N$, ensure that $\text{sup}(M \cap \omega_2) \in N$, a contradiction.
\end{proof}

Section 6 of Usuba~\cite{MR3523539} produces a model where $\dagger$ holds\footnote{He shows that the model satisfies ``Semistationary set reflection", which is equivalent to $\dagger$.} and there exists a nonreflecting ladder system for $S^2_0$.  Together with Lemma \ref{lem_NRladder} this yields a model witnessing the following corollary.

\begin{corollary}
The $\dagger$ principle does not imply $\text{SCC}^{\text{cof}}_{\text{gap}}$.
\end{corollary}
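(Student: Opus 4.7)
The plan is to exhibit a single model in which $\dagger$ holds while $\text{SCC}^{\text{cof}}_{\text{gap}}$ fails. Since Lemma \ref{lem_NRladder} has already packaged the combinatorial work of refuting $\text{SCC}^{\text{cof}}_{\text{gap}}$ from a nonreflecting ladder system on $S^2_0$, the task reduces to producing a model in which both $\dagger$ and such a ladder system coexist. I would simply invoke the model constructed in Section~6 of Usuba~\cite{MR3523539} and then feed its ladder system into Lemma \ref{lem_NRladder}.

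More precisely, first I would quote Usuba's construction, which yields a model $V$ satisfying Semistationary Set Reflection (equivalent in ZFC to $\dagger$) and containing a nonreflecting ladder system $\vec{d} = \langle d_\alpha : \alpha \in S^2_0 \rangle$. Next I would apply Lemma \ref{lem_NRladder} inside $V$: the lemma directly converts the existence of $\vec{d}$ into the statement that $\text{SCC}^{\text{cof}}_{\text{gap}}$ fails. Thus $V$ witnesses $\dagger \wedge \neg \text{SCC}^{\text{cof}}_{\text{gap}}$, and the corollary follows.

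The only conceptually nontrivial input here is Usuba's model, which is the genuine obstacle but is cited rather than reproved. Everything else is bookkeeping: checking that Usuba's ``Semistationary set reflection'' is indeed the reformulation of $\dagger$ we are using (as noted in the paper's own footnote), and observing that Lemma \ref{lem_NRladder} requires nothing beyond the existence of $\vec{d}$ and a wellordering of $H_{\omega_3}$, both of which are of course available in $V$. I would write the proof as a short two-sentence argument that simply chains these references together.
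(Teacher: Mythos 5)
Your proposal is correct and is exactly the argument the paper gives: cite Usuba's Section~6 model, where Semistationary Set Reflection (hence $\dagger$) holds together with a nonreflecting ladder system for $S^2_0$, and then apply Lemma~\ref{lem_NRladder} to conclude that $\text{SCC}^{\text{cof}}_{\text{gap}}$ fails there. No differences worth noting.
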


\section{Proof of Theorem \ref{thm_MainTheorem}}\label{sec_LowerBound}

If $H \supseteq \omega_1$ and $S \subseteq [H]^\omega$, we say that $S$ is \emph{semistationary} iff 
\[
\{ N \in [H]^\omega \ | \  \exists M \in S \  M \sqsubseteq N  \} \text{ is stationary}.
\]

Clearly every stationary set is semistationary, but the converse is false.  Just as properness is equivalent to preservation of stationary sets, Shelah~\cite{MR1623206} shows that semiproperness of a poset $\mathbb{Q}$ is equivalent to:  every semistationary set in $V$ remains semistationary in $V^{\mathbb{Q}}$.  This, in turn, is easily equivalent to saying that every stationary set in $V$ remains at least semistationary in $V^{\mathbb{Q}}$.  We prove the following theorem, which is slightly more general than Theorem \ref{thm_MainTheorem} because it deals with arbitrary semistationary preserving outer models, rather than just forcing extensions.

\begin{theorem}\label{thm_MainTheorem_Generalized}
Assume $V \subset W$ are models of ZFC, every stationary set of countable models in $V$ remains \textbf{semi}stationary in $W$, and $\big([\omega_2]^\omega \big)^V$ is nonstationary in $W$.  Then 
\[
V \models \ \text{SCC}.
\]
\end{theorem}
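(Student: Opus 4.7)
The plan is to argue by contradiction: assume SCC fails in $V$ and use the two hypotheses on $W$ to produce, inside $V$, the very end-extension whose nonexistence was assumed. First I would apply Lemma \ref{lem_WoodinLemma_CC_club} (intersected with the $V$-club of elementary submodels) to fix a wellorder $\Delta$ of $H_{\omega_3}^V$, set $\mathfrak{A} := (H_{\omega_3}^V, \in, \Delta)$, and obtain a $V$-stationary set $S \subseteq [H_{\omega_3}^V]^\omega$ of countable $M \prec \mathfrak{A}$ such that no $N' \prec (H_{\omega_3}^V, \in)$ simultaneously satisfies $M \sqsubset N'$ and $N' \cap [\sup(M \cap \omega_2^V), \omega_2^V) \neq \emptyset$.

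Next I would move to $W$ and feed both hypotheses into a single object. Semistationary preservation makes $\widetilde{S} := \{N : \exists M \in S\ M \sqsubseteq N\}$ stationary in $W$ and also forces $\omega_1^V = \omega_1^W$. Nonstationarity of $([\omega_2]^\omega)^V$ in $W$ provides a function $F: [\omega_2^V]^{<\omega} \to \omega_2^V$ in $W$ under which no $V$-countable subset of $\omega_2^V$ is closed. Intersecting $\widetilde{S}$ with the $W$-clubs of elementary submodels of $\mathfrak{A}$ and of sets whose $\omega_2^V$-trace is closed under $F$, I would pick a countable $N \in W$ with $N \prec \mathfrak{A}$, $M \sqsubseteq N$ for some $M \in S$, and $N \cap \omega_2^V$ closed under $F$.

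The third step is to extract a $V$-witness from $N$. Since $N \cap \omega_2^V$ is $F$-closed it cannot be $V$-countable; as $N$ is $W$-countable and $\omega_1^V = \omega_1^W$, this forces $N \cap \omega_2^V \notin V$, whence $M \cap \omega_2^V \subsetneq N \cap \omega_2^V$. Lemma \ref{lem_EndExtend}, applied with $w := \Delta \restriction H_{\omega_2}^V$, yields $N \cap \sup(M \cap \omega_2^V) = M \cap \omega_2^V$, so some $\alpha \in N$ lies in $[\sup(M \cap \omega_2^V), \omega_2^V) \setminus M$. Setting $N^* := \text{Sk}^{\mathfrak{A}}(M \cup \{\alpha\})$, an element of $V$ since $M, \alpha, \mathfrak{A}$ are, the Skolem closure of $N$ gives $N^* \subseteq N$, and hence $N^* \cap \omega_1 = N \cap \omega_1 = M \cap \omega_1$; so $M \sqsubseteq N^*$, properly since $\alpha \in N^* \setminus M$, with $\alpha \in N^* \cap [\sup(M \cap \omega_2^V), \omega_2^V)$---contradicting $M \in S$. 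The main obstacle I expect is precisely this pull-back step: arranging Fact \ref{fact_Hulls} and Lemma \ref{lem_EndExtend} to cooperate so that the $V$-computed hull $N^*$ inherits the correct $\omega_1$-trace from the $W$-level object $N$, which is why a common wellorder $\Delta$ must govern the elementarity of both $M$ and $N$.
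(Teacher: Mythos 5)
Your proposal is correct and follows essentially the same route as the paper: reduce via Lemma \ref{lem_WoodinLemma_CC_club} to a stationary set $S$ of non-extendible models, use semistationary preservation in $W$ to find a countable $N \sqsupseteq M \in S$ whose trace on $\omega_2^V$ is closed under the function witnessing nonstationarity of $\big([\omega_2]^\omega\big)^V$ (hence is not in $V$), and pull back a $V$-computed Skolem hull $\text{Sk}^{\mathfrak{A}}(M \cup \{\alpha\}) \subseteq N$ to contradict $M \in S$. The only cosmetic difference is that the paper packages $\mathfrak{A}$ and $F$ into a single structure on $H_\Omega^W$ and invokes Lemma \ref{lem_EndExtend} at the reduction step rather than when choosing $\alpha$, whereas you intersect clubs in $[H_{\omega_3}^V]^\omega$ directly; both are fine.
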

\begin{proof}
First note that the assumptions ensure that $V$ and $W$ have the same $\omega_1$; otherwise the stationary set $\big( [\omega_1]^\omega)^V$ in $V$ would fail to be semistationary in $W$.

Working in $V$, fix a regular $\theta \ge \omega_3$ and a wellorder $\Delta$ of $H_\theta$.  Let
\[ 
\mathfrak{A} = (H_\theta,\in,\Delta).
\]

By Lemma \ref{lem_WoodinLemma_CC_club} and Lemma \ref{lem_EndExtend} it suffices to prove that there are club-many $M \in [H_\theta]^\omega$ which can be $\sqsubset$-extended to an elementary substructure of $\mathfrak{A}$ which includes some ordinal in $\omega_2 - M$.  Suppose toward a contradiction that this fails; let $S$ denote the stationary collection of counterexamples, and without loss of generality assume $M \prec \mathfrak{A}$ for every $M \in S$.

The hypotheses of the theorem ensure that
\begin{equation}\label{eq_S_semistat}
W \models \ S \text{ is semistationary} \text{ in } [H^V_\theta]^\omega.
\end{equation}

Work in $W$.  Let $F:[\omega_2^V]^{<\omega} \to \omega_2^V$ witness that $\big([\omega_2]^\omega \big)^V$ is nonstationary in $W$; so
\begin{equation*}
W \models \ \forall z \in [\omega_2^V]^\omega \ \  \ z \text{ closed under } F \implies z \notin H^V_\theta.
\end{equation*}
(Here we use $H^V_\theta$, which is an element of $W$, because we are not necessarily assuming that $V$ is definable in $W$).   Let $\Omega > |H_\theta|$ be regular and define
\[ 
\mathfrak{B}:=( H^W_\Omega, \in, \{ \mathfrak{A}, F   \}  ).
\]

By \eqref{eq_S_semistat} and standard facts about liftings of stationary sets, in $W$ there is some countable $N \prec \mathfrak{B}$ such that $N \sqsupseteq M$ for some $M \in S$.  Since $F \in N$ then $N \cap \omega_2^V \notin V$; together with the facts that $M \in V$ and $M \subset N$ this implies
\[ 
M \cap \omega_2^V \subsetneq N \cap \omega_2^V.
\]

Pick some $\zeta \in \omega^V_2 \cap (N-M)$ and consider the following set, which is an element of $V$ (note that $\mathfrak{A}$ has definable Skolem functions):
\[
M':= \text{Sk}^{\mathfrak{A}}(M \cup \{ \zeta \}).
\]

Since $N \prec \mathfrak{B}$ (so $\mathfrak{A} \in N$) and $M \cup \{ \zeta \} \subset N$, then $M' \subseteq N$.  So in summary we have that $M' \prec \mathfrak{A}$, $M \subset M' \subseteq N$ and $M \sqsubset N$.  It follows that $M \sqsubset M' \prec \mathfrak{A}$.  This contradicts that $M \in S$.

\end{proof}

\begin{remark}
Recall that Shelah~\cite{MR1623206} proved that semiproperness of Namba forcing implies $\text{SCC}^{\text{cof}}$.  It is tempting to try to modify the proof of Theorem \ref{thm_MainTheorem_Generalized} above to achieve $\text{SCC}^{\text{cof}}$, rather than just SCC, as follows.  Instead of working with $M$, work instead with some maximal $\sqsubseteq$-extension of $M$ which is elementary in $\mathfrak{A}$.  The problem is that Theorem \ref{thm_MainTheorem_Generalized} implies that such a $\sqsubseteq$-maximal extension will be uncountable, and thus apparently irrelevant to the preservation of semistationary sets of countable models.
\end{remark}

\section{Proofs of Theorems \ref{thm_SCC_star_implies} and \ref{thm_GeneralizeSakai}}\label{sec_UpperBound}

Before proceeding to the proofs of Theorems \ref{thm_SCC_star_implies} and \ref{thm_GeneralizeSakai}, note that 
Sakai~\cite{MR2191239} proved that if there is a normal ideal $\mathcal{J}$ on $\omega_2$ such that $P(\omega_2)/\mathcal{J}$ is a semiproper poset, then $\text{SCC}^{\text{cof}}$ holds.  However it is not clear if $\text{SCC}^{\text{cof}}$ would suffice to prove Theorem \ref{thm_SCC_star_implies}; i.e.\ we seem to need $\text{SCC}^{\text{cof}}_{\text{gap}}$, not just $\text{SCC}^{\text{cof}}$, to prove semiproperness of the poset
\[
\text{Add}(\omega) * \dot{\mathbb{C}}\big([\omega_2]^\omega - V \big).
\]

\subsection{Proof of Theorem \ref{thm_SCC_star_implies}}

Let $\theta > |H_{\omega_2}|$.  Fix some $w \in H_{\omega_3}$ which is a wellorder of $H_{\omega_2}$.  Fix any $N \prec (H_\theta, \in)$ such that $w \in N$.  Since $w \in N$ then Lemma \ref{lem_EndExtend} implies:
\begin{equation}\label{eq_ExtensionsOf_N}
N \subseteq Q \sqsubseteq R \text{ and } Q,R \prec (H_\theta,\in) \ \implies \ \ R \cap \text{sup}(Q \cap \omega_2) = Q \cap \omega_2.
\end{equation}
Let $(p,\dot{f})$ be a condition in $N \cap \text{Add}(\omega) * \dot{\mathbb{C}}\big([\omega_2]^\omega - V \big)$; we want to find a semigeneric condition for $N$ below it.

Let $\sigma$ be $\big(V,\text{Add}(\omega)\big)$-generic with $p \in \sigma$ and $f:= \dot{f}_\sigma$.  
\begin{globalClaim}\label{clm_EndExtension}
There is some $M \prec (H_\theta[\sigma], \in)$ such that
\begin{itemize}
 \item $f \in M$;
 \item $M \cap \omega_2 \notin V$; and
 \item $N[\sigma] \sqsubseteq M$.
\end{itemize}
\end{globalClaim}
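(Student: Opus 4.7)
The plan is to build, in $V$, a binary tree $\langle N_s \ : \ s \in 2^{<\omega}\rangle$ of $\sqsubseteq$-extensions of $N$ via $\text{SCC}^{\text{cof}}_{\text{gap}}$, and then take $M$ to be the union of $N_{\sigma\restriction n}[\sigma]$ along the branch $\sigma$. In detail, set $N_\emptyset:=N$, and at stage $s$, assuming $N_s\prec(H_\theta,\in)$ is countable with $w\in N_s$, apply $\text{SCC}^{\text{cof}}_{\text{gap}}$ to pick $\alpha_{s,0}>\sup(N_s\cap\omega_2)$ and a $\sqsubseteq$-extension $N_{s^\frown 0}\prec(H_\theta,\in)$ of $N_s$ satisfying the gap property at $\alpha_{s,0}$, and set $\xi_s:=\min\bigl(N_{s^\frown 0}\cap[\alpha_{s,0},\omega_2)\bigr)$. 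Using the cofinality in $\omega_2$ of the set of valid parameters, next pick $\alpha_{s,1}>\xi_s$ and a $\sqsubseteq$-extension $N_{s^\frown 1}$ with the gap property at $\alpha_{s,1}$. Since $w\in N_s$ throughout, Lemma \ref{lem_EndExtend} guarantees that every $\sqsubseteq$-extension appearing in the tree end-extends its predecessor below $\omega_2$; combined with the gap clause, a straightforward induction then yields $\xi_s\in N_{s'}$ if and only if $s'$ extends $s^\frown 0$.

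Now work in $V[\sigma]$ and define $M:=\bigcup_{n<\omega}N_{\sigma\restriction n}[\sigma]$. Because $\text{Add}(\omega)\in N_s$, each $N_s[\sigma]\prec(H_\theta[\sigma],\in)$ in $V[\sigma]$; the chain is $\subseteq$-increasing, so $M\prec(H_\theta[\sigma],\in)$ by the standard union-of-chain argument. Since $\dot{f}\in N\subseteq N_s$, $f\in M$. Inclusion $N[\sigma]\subseteq M$ is immediate, and the c.c.c.\ of $\text{Add}(\omega)$ yields $N_s[\sigma]\cap\mathrm{Ord}=N_s\cap\mathrm{Ord}$ for each $s$, so $M\cap\omega_1=N\cap\omega_1$ (giving $N[\sigma]\sqsubseteq M$) and $M\cap\omega_2=\bigcup_n(N_{\sigma\restriction n}\cap\omega_2)$.

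Finally, $M\cap\omega_2\notin V$ because $\sigma$ is decodable from $M\cap\omega_2$. Indeed, by the induction from the first paragraph, $\sigma(n)=0$ if and only if $\xi_{\sigma\restriction n}\in M\cap\omega_2$, so the recursion $\sigma(n)=0\iff\xi_{\sigma\restriction n}\in M\cap\omega_2$ computes $\sigma$ from $M\cap\omega_2$ using only the ground-model function $s\mapsto\xi_s$. If $M\cap\omega_2\in V$ then $\sigma\in V$, contradicting its $\text{Add}(\omega)$-genericity over $V$. The main delicate point is maintaining the distinguishing role of $\xi_s$ for every descendant of $s$ in the tree, which is secured by combining Lemma \ref{lem_EndExtend} (end-extension below $\omega_2$) with the gap clause of $\text{SCC}^{\text{cof}}_{\text{gap}}$ (emptiness above previous suprema).
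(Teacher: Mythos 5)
Your proof is correct, but it is organized differently from the paper's. The paper runs the recursion \emph{in $V[\sigma]$}: it builds three interleaved $\omega$-sequences of models (one of which is ``active'' at stage $n$ according to the $n$-th bit of $\sigma$), obtains three candidate traces $z^{\mathcal M},z^{\mathcal Y},z^{\mathcal N}\subseteq\omega_2$, and argues by a pigeonhole that at least one of them is outside $V$ — since if all three were in $V$, the sequence $\vec\alpha$ and hence $\sigma$ could be reconstructed in $V$. You instead build a $\sigma$-independent binary tree of extensions entirely in $V$ and then select the branch determined by $\sigma$; the payoff is that the single resulting $M$ directly codes $\sigma$ via the ground-model map $s\mapsto\xi_s$, so no pigeonhole among several candidates is needed. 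Both arguments rest on exactly the same two pillars: the gap clause of $\text{SCC}^{\text{cof}}_{\text{gap}}$ (to keep $\xi_s$ out of the ``1''-side extensions) and end-extension below $\omega_2$ coming from the wellorder $w$ via Lemma \ref{lem_EndExtend} (to keep $\xi_s$ out of all \emph{later} models on that side). One small imprecision: your claim that $\xi_s\in N_{s'}$ iff $s'\supseteq s^\frown 0$ need not hold for $s'$ \emph{incomparable} with $s$, since the two subtrees are built independently; but this is harmless because the decoding only ever consults nodes along the branch $\sigma$, where the equivalence does hold by your induction.
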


Note that this claim will finish the proof of Theorem \ref{thm_SCC_star_implies}, because there will then be some $f' \le f$ which is a totally generic condition for $\big( M, \mathbb{C}([\omega_2]^\omega - V) \big)$.  To construct such an $f'$ (assuming the claim holds), first define a descending chain $\langle f_n \ : \ n \in \omega \rangle$ with $f_0 = f$ such that the upward closure of $\{ f_n \ : \ n \in \omega \}$ is an $\big( M, \mathbb{C}([\omega_2]^\omega - V) \big)$-generic filter.  By Fact \ref{fact_Gitik}, $[\omega_2]^\omega - V$ is projective stationary and in particular unbounded in $([\omega_2]^\omega)^{V[\sigma]}$, so an easy density argument ensures that $\bigcup_{n \in \omega} \text{range}(f_n) = M \cap \omega_2$ and $\text{sup}_{n \in \omega} \text{dom}(f_n) = M \cap \omega_1$.  Then 
\[
\bigcup_{n \in \omega} f_n \cup \{ M \cap \omega_1 \mapsto  M \cap \omega_2  \}
\]
satisfies the continuity requirement, and is a condition because $M \cap \omega_2 \notin V$.  Since  $N[\sigma] \sqsubseteq M$ and $f'$ is a generic condition for $\big( M, \mathbb{C}([\omega_2]^\omega - V) \big)$, $f'$ is a semigeneric condition for $\big( N[\sigma], \mathbb{C}([\omega_2]^\omega - V) \big)$; and since $p$ is an $\big(N,\text{Add}(\omega) \big)$ master condition then $(p,\dot{f}')$ will be the semigeneric condition we seek (where $\dot{f}'$ is a name for $f'$).  

\begin{proof}
(of Claim \ref{clm_EndExtension})  The following coding argument in some ways resembles arguments from Gitik~\cite{MR820120} and Velickovic~\cite{MR1174395}.  In $V[\sigma]$ we recursively define three sequences of elementary substructures of $(H^V_\theta,\in)$:
\begin{eqnarray*}
\langle Q^{\mathcal{M}}_n \ | \ n < \omega \rangle \\
\langle Q^{\mathcal{Y}}_n \ | \ n < \omega \rangle \\
\langle Q^{\mathcal{N}}_n \ | \ n < \omega \rangle .
\end{eqnarray*}

Intuitively, the ``$\mathcal{M}$" (for Move) sequence will tell us when to move to the next decimal place; the ``$\mathcal{Y}$" (for Yes) sequence will indicate where to put a 1; and the ``$\mathcal{N}$" (for No) sequence will indicate where to put a 0.

Define a function 
\[
\text{Active}: \omega \to \{ \mathcal{M},\mathcal{Y},\mathcal{N}\}
\]
as follows.  If $n$ is even, say $n= 2k$, then $\text{Active}(n) = \mathcal{Y}$ if the $k$-th bit of $\sigma$ is 1, $\text{Active}(n) = \mathcal{N}$ if the $k$-th bit of $\sigma$ is 0.  If $n$ is odd, then $\text{Active}(n)$ is always $\mathcal{M}$.  For $\mathcal{X} \in \{ \mathcal{M},\mathcal{Y},\mathcal{N} \}$ and $n < \omega$ we say that $\mathcal{X}$ is active at stage $n$ if $\text{Active}(n) = \mathcal{X}$; otherwise $\mathcal{X}$ is passive at stage $n$. 

Set $Q^{\mathcal{X}}_0:= N$ for all $\mathcal{X} \in \{ \mathcal{M},\mathcal{Y},\mathcal{N} \}$.  Assume $Q^{\mathcal{X}}_n$ is defined for each $\mathcal{X} \in \{ \mathcal{M},\mathcal{Y},\mathcal{N} \}$.  Set $s^{\mathcal{X}}_n:= \text{sup}(Q^{\mathcal{X}}_n \cap \omega_2)$ for each $\mathcal{X} \in \{ \mathcal{M},\mathcal{Y},\mathcal{N} \}$, and $s_n:= \text{max}\{ s^{\mathcal{M}}_n, s^{\mathcal{Y}}_n, s^{\mathcal{N}}_n  \}$.  We then define the $n+1$-st models as follows: 
\begin{itemize}
 \item If $\mathcal{X}$ is passive at stage $n$ then $Q^{\mathcal{X}}_{n+1}:= Q^{\mathcal{X}}_n$.
 \item If $\mathcal{X}$ is active at stage $n$, then we use that $\text{SCC}^{\text{cof}}_{\text{gap}}$ holds in $V$ to find some $Q^{\mathcal{X}}_{n+1} \in V \cap [H^V_\theta]^\omega$ such that:
 \begin{itemize}
  \item $Q^{\mathcal{X}}_{n+1} \prec (H^V_\theta, \in)$;
  \item $Q^{\mathcal{X}}_n \sqsubset Q^{\mathcal{X}}_{n+1}$;
  \item $Q^{\mathcal{X}}_{n+1} \cap [s^{\mathcal{X}}_n, s_n ) = \emptyset$; and
  \item $Q^{\mathcal{X}}_{n+1} \cap [s_n,\omega_2) \ne \emptyset$. 
 \end{itemize}   
\end{itemize} 

This completes the recursive definition of the three sequences of models.  Note that \eqref{eq_ExtensionsOf_N} and the construction of the models implies that for all $n < \omega$ and each $\mathcal{X} \in \{\mathcal{M},\mathcal{Y},\mathcal{N}\}$:
\begin{equation}\label{End_Extend_active}
Q^{\mathcal{X}}_{n+1} \cap s_n = Q^{\mathcal{X}}_n \cap \omega_2.
\end{equation}

Let 
\[
Q^{\mathcal{X}}_\omega:= \bigcup_{n \in \omega} Q^{\mathcal{X}}_n 
\]
and set $z^{\mathcal{X}}:= Q^{\mathcal{X}}_\omega \cap \omega_2$ for each ${\mathcal{X}} \in \{ \mathcal{M},\mathcal{Y},\mathcal{N} \}$.  Also define a sequence $\vec{\alpha}$ by setting $\alpha_0:= \text{sup}(N \cap \omega_2)$, and for each $n \in \omega$, define $\alpha_{n+1}$ to be the least element of $Q^{\text{Active}(n)}_{n+1} \cap [s_n,\omega_2)$.

Notice that the construction of the sequences of models ensures:
\begin{enumerate}
 \item If $H$ is a transitive $ZF^-$ model and $z^{\mathcal{M}}$, $z^{\mathcal{Y}}$, and $z^{\mathcal{N}}$ are all elements of $H$, then $\vec{\alpha} \in H$, via the following algorithm.  Clearly $\alpha_0 \in H$ by transitivity.  Given $\alpha_n$, there is a \emph{unique} $\mathcal{X}_n \in \{  \mathcal{M},\mathcal{Y},\mathcal{N} \}$ such that $\alpha_n \in z^{\mathcal{X}_n}$;\footnote{Namely $\mathcal{X}_n = \text{Active}(n)$, though the function $\text{Active}$ is not assumed to be available to $H$.} then $\alpha_{n+1}$ is the smallest ordinal $>\alpha_n$ which is missing from $z^{\mathcal{X}_n}$ but is in $\cup \{ z^{\mathcal{M}},z^{\mathcal{Y}},z^{\mathcal{N}} \}$.  (This makes use of \eqref{End_Extend_active})
 \item $\sigma$ can be decoded from the parameters $\vec{\alpha}$, $z^{\mathcal{Y}}$, and $z^{\mathcal{N}}$ as follows:  for $k \in \omega$, $\sigma(k) = 1$ if $\alpha_{2k} \in z^{\mathcal{Y}}$, and $\sigma(k) = 0$ if $\alpha_{2k} \in z^{\mathcal{N}}$. 
 \end{enumerate}

Thus, since $\sigma \notin V$, it follows that there is at least one $\mathcal{X}^* \in \{ \mathcal{M},\mathcal{Y},\mathcal{N} \}$ such that $z^{\mathcal{X}^*} \notin V$.  Note that since $\text{Add}(\omega)$ is ccc, then in particular $\sigma$ automatically includes a master condition (namely $\emptyset$) for $Q^{\mathcal{X}^*}_n$, for every $n < \omega$.  It follows that $Q^{\mathcal{X}^*}_n[\sigma] \cap ORD = Q^{\mathcal{X}^*}_n \cap ORD$ for every $n \in \omega$.  Also $\langle Q^{\mathcal{X}^*}_n[\sigma] \ | \ n \in \omega \rangle$ is a $\prec$-chain of elementary submodels of $(H_\theta[\sigma], \in)$.  Notice by construction that all these models have the same intersection with $\omega_1$ also; namely $N \cap \omega_1$.  Set
\[ M:= \bigcup_{n < \omega} Q^{\mathcal{X}^*}_n[\sigma]. \]  Then $N[\sigma] \sqsubseteq M$, $M \prec (H_\theta[\sigma],\in)$, and 
\begin{equation*}
M \cap \omega_2 = \bigcup_{n < \omega} \Big( Q^{\mathcal{X}^*}_n[\sigma] \cap \omega_2 \Big) = \bigcup_{n < \omega} \Big( Q^{\mathcal{X}^*}_n \cap \omega_2 \Big) = z^{\mathcal{X}^*} \notin V.
\end{equation*}
This completes the proof of the claim.
\end{proof}

\subsection{Proof of Theorem \ref{thm_GeneralizeSakai}}

Let $\mathcal{J}$ be a normal ideal on $\omega_2$ such that $\wp(\omega_2)/\mathcal{J}$ is a proper forcing.  Fix a sufficiently large regular $\theta$ and a wellorder $\Delta$ on $H_\theta$; let 
\[ \mathfrak{A}:= (H_\theta, \in, \Delta, \{ \mathcal{J} \}). \]

Since $\mathcal{J}$-positive sets are unbounded in $\omega_2$, then to prove $\text{SCC}^{\text{cof}}_{\text{gap}}$ it suffices (by Lemma \ref{lem_WoodinLemma}) to find club-many $N \in [H_\theta]^\omega$ such that: 
\begin{equation*}
\{ \alpha < \omega_2 \ | \  \text{Sk}^{\mathfrak{A}}(N \cup \{ \alpha \}) \cap \alpha = N \cap \alpha \} \in \mathcal{J}^+ .
\end{equation*}

By Fact \ref{fact_Hulls}, for any $N \prec \mathfrak{A}$:
\begin{equation*}
\text{Sk}^{\mathfrak{A}}(N \cup \{ \alpha \}) = \{ f(\alpha) \ | \ f \in N \text{ and } f \text{ is a function } \}.
\end{equation*}
Thus it suffices to find club-many $N \in [H_\theta]^\omega$ such that
\begin{equation*}
A_N:= \{ \alpha < \kappa \ | \ \forall f \in N \ \ f(\alpha) < \alpha \implies f(\alpha) \in N \} \in \mathcal{J}^+ .
\end{equation*}

Suppose toward a contradiction that
\begin{equation*}
S:= \{ N \prec \mathfrak{A} \ | \ A_N \in \mathcal{J}   \} \text{ is stationary in } [H_\theta]^\omega .
\end{equation*}

Let $U$ be generic for $P(\omega_2)/\mathcal{J}$ and let $j: V \to_U M_U$ be the generic ultrapower embedding.  By Fact \ref{fact_includeMasterCondition}, there is some $N \in S$ (in fact stationarily many) such that $U$ includes a master condition for $N$.  Fix such an $N$ for the remainder of the proof.

Set $\kappa:= \omega_2^V$.  Since $N \in S$ then $A_N \in \mathcal{J}$, which implies that $A_N \notin U$ and thus $\kappa \notin j(A_N)$.  Also since $|N|^V < \text{crit}(j)$ then $j(N) = j[N]$; so 
\[
j(A_N) = \{ \alpha < \kappa \ | \ \forall f \in j(N)=j[N] \ \ f(\alpha) < \alpha \implies f(\alpha) \in j(N) = j[N]   \}.
\]
Since $\kappa \notin j(A_N)$ there is some $f' \in j[N]$ such that $f'(\kappa) < \kappa$ but $f'(\kappa) \notin j[N]$; say $f' = j(f)$ where $f \in N$.  Also note that $j[N] \cap \kappa = N \cap \kappa$. In summary, we have found an $f$ such that:
\begin{equation}\label{eq_To_Contradict}
f \in N,   \text{ } j(f)(\kappa) < \kappa \text{, and } j(f)(\kappa) \notin N .
\end{equation}

Set $\beta:= j(f)(\kappa)$; then $\kappa \in j\big(f^{-1}[\{ \beta \}] \big)$ and so $f^{-1}[\{ \beta \}] \in U$.  Since $U$ is a filter on $P^V(\kappa)$ then
\begin{equation*}
V[U] \models \beta \text{ is the unique ordinal such that } f^{-1}[\{ \beta \}] \in U .
\end{equation*}
Back in $V$, let $\dot{\beta}_f$ be the name which denotes the unique value for which $f$ is constant on a $\dot{U}$-measure one set, if such a thing exists.  Since $f \in N$ then we can assume $\dot{\beta}_f \in N$.  But then
\[
j(f)(\kappa) = \beta = (\dot{\beta}_f)_U \in N
\]
where the last relation is due to the fact that $U$ includes a master condition for $N$.  This contradicts \eqref{eq_To_Contradict}. \qed

\section{Proof of Corollary \ref{cor_gaps_cons_NoPrecip}}\label{sec_Cor_NS_omega_1}

In this brief section we use Theorem \ref{thm_GeneralizeSakai} to produce a model where $\text{SCC}^{\text{cof}}_{\text{gap}}$ holds, but there is no precipitous ideal on $\omega_1$.

Assume 0-pistol does not exist, and let $K$ be the core model (see Chapter 7 of \cite{ZemanBook}).  Work in $K$.  Assume $\kappa$ is a measurable cardinal and let $G$ be $(K, \text{Col}(\omega_1, < \kappa))$-generic.  By Jech-Magidor-Mitchell-Prikry~\cite{MR560220}, in $K[G]$ there is a normal ideal $\mathcal{J}$ on $\aleph_2 = \kappa$ such that $P(\kappa)/\mathcal{J}$ is forcing equivalent to a $\sigma$-closed poset.  By Theorem \ref{thm_GeneralizeSakai}, 
\begin{equation*}
K[G] \models \text{SCC}^{\text{cof}}_{\text{gap}} .
\end{equation*}

Now $K$ is absolute for set forcing; so $K$ is the core model from the point of view of $K[G]$ (\cite{ZemanBook}, Theorem 7.4.11).  If $K[G]$ had a precipitous ideal on $\omega_1^{K[G]}$, then $\omega_1^{K[G]}$ would be measurable in $K$ (\cite{ZemanBook}, Theorem 7.4.8).  But since $\text{Col}(\omega_1, < \kappa)$ preserves $\omega_1$ then
\[
\omega_1^{K[G]} = \omega_1^K
\]
so it is impossible for $\omega_1^{K[G]}$ to be measurable in $K$.  So $K[G]$ satisfies $\text{SCC}^{\text{cof}}_{\text{gap}}$ but has no precipitous ideal on $\omega_1$.

\section{Some remarks about Strong Chang's Conjecture,  special Aronszajn trees on $\omega_2$, and bounded dagger principles}\label{sec_ConcludingRemarks}

We call attention to the following two theorems:
\begin{theorem}[Todorcevic-Torres Perez~\cite{MR2965421}, Theorem 2.2]\label{thm_TodPerezThm}
If CH fails and $\text{SCC}^{\text{cof}}_{\text{gap}}$ holds, then there are no \textbf{special} Aronszajn trees on $\omega_2$.
\end{theorem}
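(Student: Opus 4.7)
The plan is a proof by contradiction. Suppose $T$ is a special $\omega_2$-Aronszajn tree witnessed by a specializing function $f: T \to \omega_1$ (injective along every chain). The first observation is that $\text{SCC}^{\text{cof}}_{\text{gap}} \implies \text{SCC} \implies 2^{\aleph_0} \le \aleph_2$ (by Todorcevic~\cite{MR1261218}, as noted after Definition \ref{def_SCC}); combined with the hypothesis $\neg\text{CH}$, this yields $2^{\aleph_0} = \aleph_2$, and this is the sole role of the CH hypothesis. Using this, fix a bijection $\Psi : \omega_2 \to {}^{\omega}\omega_1$, so that countable sequences of countable ordinals are encoded by ordinals below $\omega_2$.

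Next, pick a countable $N \prec (H_\theta, \in, \Delta)$ containing $T$, $f$, $\Psi$, and set $\delta := \sup(N \cap \omega_2)$ and $\eta := N \cap \omega_1$; since $N$ is countable, $\mathrm{cf}(\delta) = \omega$. Apply $\text{SCC}^{\text{cof}}_{\text{gap}}$ to obtain $N \sqsubseteq N' \prec H_\theta$ with $N' \cap [\delta, \alpha) = \emptyset$ and $N' \cap [\alpha, \omega_2) \neq \emptyset$ for some ordinal $\alpha > \delta$ chosen as large as needed (using the cofinality clause). Pick $\beta \in N' \cap [\alpha, \omega_2)$ and a node $t \in T_\beta \cap N'$. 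For every $\xi \in N \cap \omega_2$ the node $t \restriction \xi$ lies in $N'$, so $f(t \restriction \xi) \in N' \cap \omega_1 = \eta$, and specialization makes $\xi \mapsto f(t \restriction \xi)$ an injection from $N \cap \omega_2$ into the countable set $\eta$.

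The crux is to convert this data into a violation of the gap condition. For a canonical cofinal $\omega$-sequence $\langle \xi_n : n < \omega\rangle$ in $\delta$—chosen definably from $N'$-parameters (for instance the $\Delta$-least $\omega$-enumeration of some cofinal sequence in $\delta$ that is recoverable from the tree-data below $t$)—form $\sigma_t := \langle f(t \restriction \xi_n) : n < \omega \rangle \in {}^{\omega}\eta$ and its code $\mu_t := \Psi^{-1}(\sigma_t) \in \omega_2$. The strategy is to show that $\mu_t$ is Skolem-definable from $N'$-data, hence $\mu_t \in N'$, while arranging via the choice of $\alpha$ that $\mu_t \in (\delta, \alpha)$. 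This contradicts $N' \cap [\delta,\alpha) = \emptyset$.

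The main obstacle is precisely this last engineering step. The set $\Psi^{-1}[{}^{\omega}\eta]$ has cardinality $2^{\aleph_0} = \aleph_2$ and so is cofinal in $\omega_2$; this is what lets one pick $\alpha$ (cofinally in $\omega_2$) so that the relevant code $\mu_t$ is below $\alpha$ while being above $\delta$. The subtle point is that the natural cofinal sequence $\langle \xi_n\rangle$ in $\delta$ cannot itself lie in $N'$ (else $\delta \in N'$, contradicting the gap), so one must define $\langle\xi_n\rangle$ from $t$ by using the tree structure of $T$ below $t$ via the specializing function $f$, so that the $N'$-accessible node $t$ uniformly determines both the sequence and its $f$-values. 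Executing this encoding so that $\mu_t$ genuinely lies in $N' \cap (\delta, \alpha)$ is the technical heart of the argument and is where the three hypotheses—specialization of $T$, $\neg\text{CH}$, and the gap clause of $\text{SCC}^{\text{cof}}_{\text{gap}}$—interact in an essential way.
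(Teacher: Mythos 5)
Your setup is fine as far as it goes: extracting $2^{\aleph_0}=\aleph_2$ from $\neg$CH together with $\text{SCC}\Rightarrow 2^{\aleph_0}\le\aleph_2$, producing the end-extension $N'$ with the gap, picking $t\in T_\beta\cap N'$ for $\beta\in N'\cap[\alpha,\omega_2)$, and observing that $\xi\mapsto f(t\restriction\xi)$ injects $N\cap\omega_2$ into $\eta=N'\cap\omega_1$ are all correct. But the proof stops exactly where a contradiction would have to be produced, and the strategy you sketch for that step does not work as described. To conclude $\mu_t\in N'$ you need the sequence $\sigma_t=\langle f(t\restriction\xi_n) : n<\omega\rangle$ itself (not merely its entries) to be definable in $N'$ from parameters in $N'$; this requires a cofinal $\omega$-sequence in $\delta$ definable from $N'$-parameters, and any such definition would put $\delta$ itself into $N'$, which is precisely the contradiction being sought---so it cannot be assumed as a tool. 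The suggestion of recovering $\langle\xi_n\rangle$ ``from the tree-data below $t$'' does not resolve this: the chain below $t$ has order type $\beta$, far above $\delta$, and nothing about $T$ or $f$ canonically singles out $\delta$ or a sequence cofinal in it. In addition, the choice of $\alpha$ is circular: in $\text{SCC}^{\text{cof}}_{\text{gap}}$ the model $N'$ (hence $t$, hence $\mu_t$) is produced only \emph{after} $\alpha$ is fixed, so one cannot ``choose $\alpha$ large enough that $\mu_t<\alpha$''; and since $\Psi^{-1}[{}^{\omega}\eta]$ has size $\aleph_2$ and is cofinal in $\omega_2$, there is no reason a particular code lands in $(\delta,\alpha)$ for the fixed $\alpha$. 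So the decisive step is a genuine gap, not a routine technicality.

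For comparison, the paper does not prove this statement by a direct coding argument at all; it cites Todorcevic--Torres Perez and then derives the stronger Theorem \ref{thm_Improved_TodorcPerez} (with hypothesis weakened to SCC) by chaining known results: SCC implies $\text{WRP}([\omega_2]^\omega)$ (Todorcevic), which is equivalent to the nonexistence of a costationary local club subset of $[\omega_2]^\omega$, while a special Aronszajn tree on $\omega_2$ yields a \emph{thin} local club (Friedman--Krueger) that must be costationary when CH fails (Baumgartner--Taylor). If you want a self-contained argument, either follow that route or carry out the actual combinatorial heart of the Todorcevic--Torres Perez proof; as written, your argument establishes no contradiction.
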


\begin{theorem}[Torres Perez-Wu~\cite{MR3431031}, Theorem 3.1]\label{thm_Wu_Thm}
If CH fails and $\text{SCC}^{\text{cof}}$ holds, then there are no Aronszajn trees on $\omega_2$ (i.e.\ the Tree Property holds at $\omega_2$).
\end{theorem}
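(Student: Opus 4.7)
The plan is to argue by contradiction. Suppose $T$ is an Aronszajn tree on $\omega_2$ while $\text{SCC}^{\text{cof}}$ holds and $2^{\aleph_0} \ge \aleph_2$. I would fix a regular $\theta$, a wellorder $\Delta$ on $H_\theta$, and a countable $M \prec (H_\theta,\in,\Delta)$ with $T \in M$; set $\delta := \sup(M \cap \omega_2)$ and note $\mathrm{cf}(\delta) = \omega$. For each $\alpha \in (\delta,\omega_2)$, $\text{SCC}^{\text{cof}}$ supplies a countable $N_\alpha$ with $M \sqsubseteq N_\alpha \prec (H_\theta,\in,\Delta)$ and $N_\alpha \cap [\alpha,\omega_2) \ne \emptyset$. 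Choosing the $\Delta$-least witness, $N_\alpha$ contains some $t_\alpha \in T$ of height $\ge \alpha$; then
\[
b_\alpha := \text{pred}_T(t_\alpha) \cap M
\]
is a cofinal branch of the countable tree $T \cap M$, determined entirely by the ancestor of $t_\alpha$ at level $\delta$ (an element of $T_\delta$).

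Since $T$ is Aronszajn, $|T_\delta| \le \aleph_1$, so the family $\{b_\alpha : \alpha \in (\delta,\omega_2)\}$ has cardinality at most $\aleph_1$. The heart of the argument should be to show that $\neg\text{CH}$ forces this family to have size at least $\aleph_2$, yielding a contradiction. I would try to manufacture the additional branches by a coding construction modeled on the $\mathcal{M}/\mathcal{Y}/\mathcal{N}$ argument of Claim \ref{clm_EndExtension}: iterate $\text{SCC}^{\text{cof}}$ countably many times, using the choices at each step to encode the bits of an arbitrary real $r \in V$ into the $\omega_2$-content of the resulting extension $N^r \sqsupseteq M$, and then verify that distinct reals are forced to produce distinct witnesses $t^r$, hence distinct branches $b^r \subseteq T \cap M$.

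The main obstacle is driving the coding without recourse to a Cohen generic (which Claim \ref{clm_EndExtension} exploits). One must therefore find an intrinsic source of distinguishing data; a plausible candidate is to use the branching of $T$ itself at levels between $\delta$ and the witness ordinals in $N^r$, selecting at each step which side of a branching node in $T$ the next $\text{SCC}^{\text{cof}}$-extension follows, and invoking the Aronszajn property to guarantee that enough such branchings exist. The delicate point is ensuring that the $\sqsubseteq$-relation with $M$ survives the entire iteration and that distinct bit-sequences of the target real provably project to distinct members of $T_\delta$—this is where the specifics of $T$ being Aronszajn (rather than merely a tree of height $\omega_2$) must be exploited.
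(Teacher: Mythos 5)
This theorem is quoted in the paper from Torres P\'erez--Wu and not proved there, so I can only assess your argument on its own terms. Your setup is fine as far as it goes: for an $\omega_2$-Aronszajn tree $T\in M$ with $\delta=\sup(M\cap\omega_2)$, each $\text{SCC}^{\text{cof}}$-witness $N_\alpha$ yields a node $t_\alpha$ of height $\ge\alpha$ whose predecessors at levels in $M\cap\omega_2$ all lie in $M$ (this uses $M\sqsubseteq N_\alpha$ together with $|T_\gamma|\le\aleph_1$ for $\gamma\in M$), and the resulting branch $b_\alpha$ of $T\restriction(M\cap\omega_2)$ is determined by the predecessor of $t_\alpha$ at level $\delta$, so there are at most $|T_\delta|\le\aleph_1$ of them. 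But the entire content of the theorem sits in the step you defer --- producing $\aleph_2$ distinct realized branches --- and that step is not merely unfinished; as designed it cannot succeed. Nothing in your construction rules out that every $\Delta$-least witness $t_\alpha$ lies above one and the same node of $T_\delta$, in which case $\{b_\alpha : \alpha\in(\delta,\omega_2)\}$ is a singleton and no contradiction with $\neg$CH arises.

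The proposed repair modeled on Claim \ref{clm_EndExtension} runs into a structural obstruction. First, $\text{SCC}^{\text{cof}}$ gives no control over \emph{which} elements an end-extension contains beyond meeting $[\alpha,\omega_2)$; you cannot demand that the next model contain a node of $T$ above a prescribed branching node, so there is no mechanism for ``selecting which side of a branching node the extension follows.'' Second, and more fundamentally, the information you need to vary lives \emph{below} $\delta$: two witnesses give distinct branches only if their predecessors differ at some level $\gamma\in M\cap\omega_2$ with $\gamma<\delta$. By the end-extension property (Lemma \ref{lem_EndExtend}), every ordinal added by the iterated extensions lies in $[\delta,\omega_2)$, so the pattern of new ordinals --- which is exactly what the $\mathcal{M}/\mathcal{Y}/\mathcal{N}$ coding reads off --- cannot imprint anything on the fixed countable tree $T\restriction(M\cap\omega_2)$. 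In Claim \ref{clm_EndExtension} the coded object (the Cohen real) is recovered \emph{from} the new ordinals above the sup; here you would need the coding to act in the opposite direction. The key idea of the Torres P\'erez--Wu proof is therefore still missing: a correct argument must force prescribed branches of $T\restriction(M\cap\omega_2)$ to be realized at level $\ge\delta$ by some device other than steering $\sqsubseteq$-end-extensions, and that is precisely where the ``cofinal'' clause of $\text{SCC}^{\text{cof}}$ has to do genuine work.
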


Theorem \ref{thm_Wu_Thm} strengthens Theorem \ref{thm_TodPerezThm} by weakening the hypothesis and strengthening the conclusion.  We observe that the hypothesis of Theorem \ref{thm_TodPerezThm} can in fact be weakened all the way to SCC, and the proof actually follows via a circuitous route from several older theorems:
\begin{theorem}\label{thm_Improved_TodorcPerez}
If CH fails and SCC holds, then there are no \textbf{special} Aronszajn trees on $\omega_2$. 
\end{theorem}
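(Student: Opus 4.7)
My plan is to route this through several older ZFC theorems rather than to re-engineer the direct argument of Todorcevic--Torres Perez for Theorem~\ref{thm_TodPerezThm}. The first step is the classical observation already recalled in Section~\ref{sec_Prelims}: SCC implies $2^\omega \le \omega_2$ (see Section~2 of Todorcevic~\cite{MR1261218}), so combined with $\neg$CH we obtain $2^\omega = \omega_2$ outright. This is the only place the continuum hypothesis failure enters, and it is exactly the step at which SCC (rather than merely CC) is needed, since CC places no bound on the continuum.

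Next I would invoke an older ZFC theorem asserting that, under $2^\omega = \omega_2$, the existence of a special Aronszajn tree on $\omega_2$ entails some intermediate combinatorial principle $\Pi$ at $\omega_1$. The plausible candidates for $\Pi$ are a weak square principle such as $\square^*_{\omega_1}$, the approachability property $\mathrm{AP}_{\omega_1}$, or the existence of a non-trivial coherent sequence indexed by $\omega_2$. These are precisely the kinds of ``guessing data'' recorded by a specializing function $f : T \to \omega_1$ on a tree of height $\omega_2$, and the implication ``$2^\omega = \omega_2$ plus a special Aronszajn tree on $\omega_2$ yields $\Pi$'' is classical, available either through Shelah's analysis of $I[\omega_2]$ and cardinal arithmetic or through Todorcevic's development of minimal walks and coherent sequences.

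Finally I would close the argument with another older theorem of Todorcevic, according to which Chang's Conjecture (and hence the stronger SCC) implies $\neg \Pi$. Composing the three ingredients then yields the theorem: from SCC $+ \neg$CH I extract $2^\omega = \omega_2$; from a hypothetical special Aronszajn tree on $\omega_2$ plus $2^\omega = \omega_2$ I extract $\Pi$; and SCC gives $\neg \Pi$, a contradiction. The main obstacle will be pinning down the precise choice of $\Pi$ so that both of the older ZFC theorems apply to the \emph{same} principle; once $\Pi$ is fixed, each link in the chain is essentially standard, and this composition of independent older results is exactly what the author describes as the ``circuitous route'', in contrast to an attempt at a direct combinatorial improvement of the TTP argument.
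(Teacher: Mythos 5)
Your overall strategy---composing older results rather than reworking the Todorcevic--Torres Perez argument---matches the paper's, but the proof as written has a genuine gap: the intermediate principle $\Pi$ is never actually identified, and that identification is the entire content of the argument. Worse, the candidates you float cannot work. Take $\Pi = \square^*_{\omega_1}$: by Jensen this is \emph{equivalent} to the existence of a special Aronszajn tree on $\omega_2$, so your claimed link ``SCC (or even CC) implies $\neg\Pi$'' would yield the conclusion with no use of $\neg$CH at all---and that is false, since SCC (indeed $\text{SCC}^{\text{cof}}_{\text{gap}}$, as in the model of Section~\ref{sec_Cor_NS_omega_1}) is consistent with CH, and CH yields a special Aronszajn tree on $\omega_2$ by Specker's theorem. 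The same consistency observation rules out \emph{any} $\Pi$ that follows outright from a special $\omega_2$-Aronszajn tree and is refuted by SCC alone; the failure of CH must enter the tree-to-$\Pi$ step, not merely supply $2^\omega=\omega_2$. You have also misassigned the roles of the hypotheses: in the actual argument $\neg$CH is not used to compute the continuum, and SCC is not used to bound it.

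The correct choice is $\Pi =$ ``there is a costationary local club subset of $[\omega_2]^\omega$,'' and the chain runs as follows. First, a special Aronszajn tree on $\omega_2$ yields a \emph{thin} local club $T \subseteq [\omega_2]^\omega$ (Theorem 2.3 of Friedman--Krueger~\cite{MR2276627}); this is a ZFC fact requiring no cardinal arithmetic. Second, if CH fails then a thin set cannot contain a club in $[\omega_2]^\omega$ (Baumgartner--Taylor; see Theorem 2.7 of \cite{MR2276627}), so $T$ is costationary---this is where $\neg$CH enters. Third, SCC implies $\text{WRP}([\omega_2]^\omega)$ (Lemma 6 of Todorcevic~\cite{MR1261218}), which is equivalent to the nonexistence of a costationary local club subset of $[\omega_2]^\omega$---this is where SCC enters, and plain CC would not suffice for it. Without this third link made explicit, your ``SCC implies $\neg\Pi$'' step is unsupported.
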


Theorem \ref{thm_Improved_TodorcPerez} follows immediately from the following three facts:
 \begin{enumerate}
  \item Todorcevic (Lemma 6 of ~\cite{MR1261218}) proved that SCC implies $\text{WRP}([\omega_2]^\omega)$.
  \item $\text{WRP}([\omega_2]^\omega)$ implies---in fact is equivalent to---the non-existence of a costationary, local club subset of $[\omega_2]^\omega$;\footnote{A set $T \subseteq [\omega_2]^\omega$ is called \emph{local club} iff $T \cap [\beta]^\omega$ contains a club for every $\beta < \omega_2$.} and
   \item If there is a special Aronszajn tree on $\omega_2$ then there is a \emph{thin} local club subset $T$ of $[\omega_2]^\omega$ (Theorem 2.3 of Friedman-Krueger~\cite{MR2276627});\footnote{$T$ is thin if for every $\beta < \omega_2$:  $|\{ a \cap \beta \ | \ a \in T \}|\le \omega_1$.} and if CH fails then this $T$ must be co-stationary in $[\omega_2]^\omega$, by a result of Baumgartner-Taylor (see Theorem 2.7 of \cite{MR2276627}).
    \end{enumerate}

Note that while Theorem \ref{thm_Wu_Thm} subsumes Theorem \ref{thm_TodPerezThm} in a strong way, it does not subsume Theorem \ref{thm_Improved_TodorcPerez} because it uses $\text{SCC}^{\text{cof}}$ instead of just SCC.  In fact the proof of Theorem \ref{thm_Wu_Thm} heavily uses the ``cofinal" requirement in the definition of $\text{SCC}^{\text{cof}}$.

Finally, we remark on a theorem of Usuba.  Let $\dagger_{\omega_2}$ abbreviate the statement:  every poset of size $\le \omega_2$ which preserves stationary subsets of $\omega_1$ is semiproper.  Let 
\[
\mathbb{Q}:= \text{Add}(\omega) * \dot{\mathbb{C}}\big([\omega_2]^\omega - V \big).
\]
We observe:
\begin{equation}\label{eq_UsubaDaggerImpliesOurs}
\dagger_{\omega_2} \implies \mathbb{Q} \text{ is semiproper.}
\end{equation}
To see this, first observe that $\mathbb{Q}$ \emph{always} has the following properties:
\begin{itemize}
 \item it preserves stationary subsets of $\omega_1$ (see \cite{MR2276627}); and
 \item it has cardinality $\text{max}\{ \omega_2, 2^\omega   \}$.
\end{itemize}
Now Usuba (Theorem 1.7 of \cite{MR3248209}) proved that $\dagger_{\omega_2}$ implies $2^\omega \le \omega_2$.\footnote{And SCC, though we won't use that.}  So if $\dagger_{\omega_2}$ holds then in particular
\[
|\mathbb{Q}| = \text{max}\{ \omega_2, 2^\omega   \} = \omega_2
\]
and since $\mathbb{Q}$ preserves stationary subsets of $\omega_1$ then $\dagger_{\omega_2}$ applies to it.  Thus $\mathbb{Q}$ is semiproper.

\begin{bibdiv}
\begin{biblist}
\bibselect{Bibliography}
\end{biblist}
\end{bibdiv}

\end{document}